\documentclass[a4paper,12pt,reqno]{amsart}
\usepackage{amsmath,amssymb,amsthm}
\usepackage{hyperref} 
\hypersetup{
    colorlinks=true,
    linkcolor=blue,   % internal links (like \cite)
    urlcolor=blue,    % URLs
    citecolor=blue    % citation color
}
\usepackage{ifthen}
\usepackage{cite}
\usepackage{graphicx}
\usepackage{float}
\usepackage{tcolorbox}
\usepackage{xcolor}
\usepackage{diagbox}
\usepackage{mathrsfs}
\usepackage{setspace}
\usepackage[a4paper, top=1in, bottom=1in, left=1in, right=1in]{geometry} 
\usepackage{ragged2e}
\theoremstyle{plain}

\newtheorem{thm}{Theorem}[section]
\newtheorem*{thm*}{Theorem}

\numberwithin{equation}{section}

\numberwithin{subcase}{case}

\newtheorem{lem}{Lemma}[section]
\newtheorem{prop}{Proposition}[section]

\newtheorem{defn}{Definition}[section]

\theoremstyle{definition}
\newcounter {own}
\def\theown {\thesection  .\arabic{own}}
\usepackage{tcolorbox}
\usepackage{xcolor}

{\qed\bigskip}

\newcounter{alphabet}

\newcounter{minutes}
\divide\time by 60
\newcounter{hours}
\multiply\time by 60 \addtocounter{minutes}{-\time}
\title{quaternion quadratic-phase Dunkl transform: Its properties and uncertainty principles}

\author{Akhilesh Prasad}
\author{Debabrata Biswal$^*$}
\author{Manab Kundu$^*$}
\address{Akhilesh Prasad, Indian Institute of Technology Dhanbad, Dhanbad-826 004, India.}
\email{aprasad@iitism.ac.in
}
\address{Debabrata Biswal, Indian Institute of Technology Dhanbad, Dhanbad-826 004, India.}
\email{debabrata.iitk@gmail.com}

\address{ Manab Kundu , SRM University A.P., Amaravati-52240, INDIA}
\email{manabiitism17@gmail.com} 

\subjclass[2020]{47G10; 33C52; 43A62; 42B10}
\date{}

\keywords{Quaternion Transform, The Dunkl Transform, Quadratic-Phase Dunkl Transform,  Heisenberg and Donoho–Stark Uncertainty Principle, Parseval's Formula. \\
$*$~Corresponding author}  

\begin{document}

\begin{abstract}
Motivated by the wide applications of the quadratic-phase integral transform in signal processing, optics, and time--frequency analysis, in this paper we develop a new integral transform called the quaternion quadratic-phase Dunkl transform (QQPDT). We present a detailed study of its fundamental properties. In particular, we establish continuity, linearity, scaling, modulation, Riemann-Lebesgue result, inversion formula, and Parseval’s identity.  In addition, we establish Heisenberg and Donoho-Stark-type uncertainty principles for the QQPDT. Finally, we develop a signal recovery algorithm in the QQPDT domain. The proposed method enables the reconstruction of missing signal information from partial observations under suitable localization conditions.
\end{abstract}
\maketitle
\pagestyle{myheadings}
\markboth{Akhilesh Prasad,  Debabrata Biswal, and Manab Kundu}{Two-Sided Quaternion Quadratic-Phase Dunkl Transform}

\section{Introduction}
In recent years, Dunkl harmonic analysis has attracted considerable attention due to its ability to incorporate reflection symmetries into classical harmonic analysis through Dunkl operators and the associated Dunkl kernel \cite{Dunkl1991, Rosler2003}. The mathematical structure and broad applicability of the Dunkl transform have motivated researchers to develop several generalizations and variants, such as fractional, linear canonical and other Dunkl type transforms \cite{umamaheswari2026, Tyr2025, Ghazouani2016, deJeu2006}. on the other hand, the quadratic-phase Dunkl transform (QPDT) has emerged as an important extension of the classical Dunkl transform by incorporating quadratic-phase factors into its kernel and reflection symmetries. This extension provides greater flexibility and effective for the analysis of non stationary and chirp type signals that frequently arise in radar, sonar, and optical applications. It has attracted increasing interest in harmonic analysis and successfully applied to signal processing, optics, image analysis, and mathematical physics \cite{IMAGEPROCESSING, colourimage, Raj Kumar, PrasadKundu2023}.

In modern applications, Quaternion analysis has emerged as a powerful tool for the 
representation and processing of multidimensional signals 
\cite{Quaternion book, IMAGEPROCESSING}. It decomposes signals into
four components, there by providing a richer representation of symmetry properties. Several quaternion valued integral 
transforms including  Fourier transform (QFT) \cite{mypaper}, windowed Fourier transforms (QWFT) \cite{qsignal, PrasadKundu2023}, fractional Fourier transforms (QFrFT) \cite{fractionalqdt}, quadratic-phase Fourier transform (QQPFT) \cite{Aprasad}, linear canonical transforms (QLCT) \cite{Kou2016, Bahari2019, PrasadKundu2023}, and Dunkl Transform (QDT) \cite{Bhat2026, TWOSIDEDQDT, Tyr2025} and several others have been developed together with uncertainty principles \cite{Dar2023, Safouane2026, Bahri2016, chen2015}. Due to the non commutative nature of 
quaternion multiplication, three distinct versions of quaternion 
transform can be defined such as the left-sided, right-sided, and two-sided which provide a 
powerful 
framework for modeling three dimensional rotations and 
orientations. In this paper, we 
focus on the two-sided quaternion transform. It consequently
played a pivotal role in advances across robotics, signal processing, computer graphics,
color image processing, aerospace engineering \cite{colourimage, IMAGEPROCESSING, Quaternion book}.    

Although substantial progress
has been achieved independently in quadratic-phase analysis, Dunkl
analysis, and quaternion
harmonic analysis, the combination of these three
frameworks remains largely 
unexplored. Motivated by this gap, in this pape we 
introduce a new integral transform called QQPDT. The proposed transform combines quaternionic structure with the reflection symmetry of the Dunkl framework and the additional flexibility provided by quadratic-phase factors. This enriched kernel enables the QQPDT to provide a unified and versatile framework for the analysis of multidimensional, non-stationary, and chirp-type signals with applications in color image processing, image filtering, watermarking, edge detection, and pattern recognition, thereby extending the scope of existing quadratic-phase and quaternionic Dunkl transforms.
In addition, we establish the fundamental properties of the proposed transform. Parallel to these structural development, uncertainty principles are among the most fundamental results in harmonic analysis \cite{UPQDT, Ghazouani2016, Sahbani, Shah2021, PrasadKundu2023, Shah2021}. Further, we establish Heisenberg and
Donoho–Stark type uncertainty principles with signal 
recovery algorithm for the QQPDT \cite{chen2015, Sahbani, Dar2023}. 

The key contributions of the present paper are as follows:
\begin{itemize}
\item We introduce the QQPDT as a new extension of the quadratic-phase Dunkl transform in the quaternion setting and establish its fundamental analytical properties.
\item We establish the Heisenberg and Donoho--Stark uncertainty principles for the QQPDT and develop a signal recovery algorithm in the QQPDT domain.
\end{itemize}
The paper is organized as follows. In Section~2, we recall
the essential preliminaries on quaternion algebra and Dunkl 
analysis. In Section~3, we introduce the two-sided QQPDT, along with its important special
cases corresponding to real parameters and establish its 
fundamental properties which include continuity, Riemann Lebesgue lemma, inversion formula, and Parseval's identity.
Section~4 is devoted to the formulation of a Heisenberg-type uncertainty principle in the quaternion quadratic-phase Dunkl setting. Section~5, develops Donoho–Stark uncertainty principles for the QFDT and QQPDT. As an important 
application of the proposed framework, Section~6 develops a
signal recovery algorithm in the QQPDT domain.

\section{Preliminaries} 
This section provides basic results of Dunkl Harmonic Analysis and a brief overview of  quaternion and  two sided quaternion Dunkl harmonic analysis. 
{\subsection{Dunkl Harmonic Analysis} 
Let $R$ be a root system in $\mathbb{R}$ and $\mathbb{Z}_2$ the associated reflection
group. The multiplicity function $\mu:R\rightarrow\mathbb{C}$ satisfies
$\mu(\alpha)=\mu(w(\alpha))$ for all $\lambda\in\mathbb{Z}_2$. Throughout we assume
$\mu \geq -\frac{1}{2}.$
The Dunkl weight function on $\mathbb{R}$ is defined as}
{\begin{equation*}
\lambda_\mu(x)=|x|^{2\mu+1}
\end{equation*}}
which is invariant under the action of $\mathbb{Z}_2$ and homogeneous of degree
$2\mu+1$, and the Mehta-type constant $c_\mu$ defined by 
{\begin{equation}\label{2.111}
c_\mu=\left(\int_{\mathbb{R}}
\exp\left(-\frac{|x|^2}{2}\right) \lambda_\mu(x)\,dx
\right)^{-1}.
\end{equation}

The associated measure on $\mathbb{R}^2$ is}
{\begin{equation} \label{eq2.11}
d\lambda_\mu(x)=w_\mu(x_1)w_\mu(x_2)\,dx_1dx_2.
\end{equation}
For essential results in Dunkl harmonic analysis on the real line, we refer the reader to \cite{Rosler2003, Dunkl1991}.

\subsection{Quaternion}
It is a class of hyper-complex numbers that was formally introduced 
by William Rowan Hamilton in 1843.
It extends the concept of complex field $\mathbb{C}$ to a four-dimensional algebra, denoted by 
$\mathbb{H}$. Every quaternion $q\in \mathbb{H}$ can be expressed in the 
form
$ \textbf{q}=q_0+ \textbf{\textbf{i}}q_1+ \textbf{\textbf{j}}q_2+ \textbf{k}q_3 $ where $q_0,q_1,q_2,q_3 \in \mathbb{R}$.
The three imaginary units 
$\bf{\textbf{i}},\bf{\textbf{j}}, \bf{k}  $ satisfy specific non-commutative multiplication rules, which characterize the algebraic structure of the quaternion system.
\begin{equation*} \label{eq1.1}
\bf{ij} = -\bf{ji} = -\bf{k}, \quad \bf{jk} = -\bf{kj} = \bf{\textbf{i}}, \quad \bf{ki} = -\bf{\textbf{i}k} = \bf{\textbf{j}}, \quad \bf{\textbf{i}}^2 = \bf{\textbf{j}}^2 = \bf{k}^2 = \bf{ijk} = -1, 
\end{equation*}
For a quaternion $ \textbf{q} = q_0 + \textbf{\textbf{i}}q_1 + \textbf{\textbf{j}}q_2 + \textbf{k}q_3 \in \mathbb{H}$, $q_0$ is called the scalar part of $q$ denoted by $Sc(q)$ and a pure quaternion $\bf{q}$ denoted by $\text{Vec}(\textbf{q}) = \textbf{\textbf{i}}q_1 + \textbf{\textbf{j}}q_2 + \textbf{k}q_3$, and for $|\textbf{q}|=1$, \textbf{q} is called unit quaternion.
The multiplication of two quaternions is written 
 as
$\textbf{qp} = q_0 p_0 - \textbf{q} \cdot \textbf{p}+ q_0 \textbf{p} + p_0 \textbf{q} + \textbf{q} \times \textbf{p},$ 
where
\begin{eqnarray}
 \textbf{q} \cdot \textbf{p} &=& q_1 p_1 + q_2 p_2 + q_3 p_3, \notag \\
\textbf{q} \times \textbf{p} &=& \mathbf{\textbf{i}}(q_2 p_3 - q_3 p_2) + \mathbf{\textbf{j}}(q_3 p_1 - q_1 p_3) + \mathbf{k}(q_1 p_2 - q_2 p_1). \notag  
\end{eqnarray}
For quaternions, multiplication is non-commutative \textbf{i}.e $\textbf{pq} \neq \textbf{qp} $ \\
 The quaternionic conjugation $\bar{\textbf{q}}$ is given by 
\begin{equation*}\label{eq1.3}
\bar{\textbf{q}} = q_0 - \mathbf{\textbf{i}}q_1 - \mathbf{\textbf{j}}q_2 - \mathbf{k}q_3.
\end{equation*}
Using the above properties, we have
$\overline{\textbf{qp}} = \overline{\textbf{p}} \, \overline{\textbf{q}}.$

Any quaternion can be represented by two complex number. 
\begin{eqnarray*}
\textbf{q}&=&(q_0+ \mathbf{\textbf{i}}q_1)+ (\mathbf{\textbf{j}}q_2+ \mathbf{k}q_3) \\
&=&(q_0+ \mathbf{\textbf{i}}q_1)+ (\mathbf{\textbf{j}}q_2- \mathbf{ji}q_3) \\
&=&(q_0+ \mathbf{\textbf{i}}q_1)+ \mathbf{\textbf{j}}(q_2- \mathbf{\textbf{i}}q_3)\\
&=& z_1 + \mathbf{\textbf{j}}\overline{z_2}
\end{eqnarray*}
where $z_1, z_2 \in \mathbb{C}$ are two complex numbers.\\
\noindent and its norm is defined as
\begin{equation*}
|\textbf{q}| = \sqrt{q\overline{q}} = \sqrt{q_0^2 + q_1^2 + q_2^2 + q_3^2}.
\end{equation*}
Also it is easy to check that
$|pq| = |p||q|, \quad p, q \in \mathbb{H}.$ 
In addition,the real scalar component exhibits cyclic symmetry under multiplication
\begin{equation*}
\langle pqr \rangle = \langle qrp \rangle = \langle rpq \rangle, \quad \forall p, q, r \in \mathbb{H}.
\end{equation*}
From the quaternion conjugate and the modulus of q, the inverse of a non-zero quaternion is given by $p\in \mathbb{H}$ is $ p^{-1} = \frac{\overline{p}}{|p|^2} $
    
\begin{defn}
 A mapping $f : \mathbb{R}^2 \to \mathbb{H}$  with quaternion values can be represented as
\begin{equation*}
f(t_1, t_2) = f_1(t_1, t_2) + \mathbf{\textbf{i}}f_2(t_1, t_2) + \mathbf{\textbf{j}}f_3(t_1, t_2) + \mathbf{ij}f_4(t_1, t_2),   
\end{equation*}
where  $f_r : \mathbb{R}^2 \to \mathbb{R}$ \textit{for} $r = 1, 2, 3, 4$. 
\end{defn}

\begin{defn}{Quaternion Schwartz space $\mathcal{S}(\mathbb{R}^2,\mathbb{H})$: \cite{Safouane2026}}  
The space consists of all quaternion valued
functions with finite $\|\cdot\|_{p,\mu}$ norm.
The quaternion Schwartz space $\mathcal{S}(\mathbb{R}^2,\mathbb{H})$ is defined as the set of
$C^\infty$ functions from $\mathbb{R}^2$ to $\mathbb{H}$ such that for all multi-indices
$\alpha$ and $m\in\mathbb{N}$,
\begin{equation*}
\sup_{x\in\mathbb{R}^2}
(1+\|x\|)^m |D^\alpha f(x)| < \infty.
\end{equation*}
\end{defn}

\begin{defn} 
\textit{For $p \in [1, \infty)$, $L^p_\mu(\mathbb{H})$ is the space of Lebesgue measurable function $f$ on $ \mathbb{H}$ is defined as \cite{Safouane2026, Saoudi2023} }
\begin{equation*}
\|f\|_{\mu,p} = 
\begin{cases} 
\left( \int_{\mathbb{R}^2} |f(v)|^p |v|^{2\mu+1} dv \right)^{\frac{1}{p}}< \infty & \text{for} \quad 1 \le p < \infty, \\
\text{ess sup}_{t \in \mathbb{R}} |f(v)| < \infty & \text{for} \quad p = \infty.
\end{cases}
\end{equation*}
\end{defn}

\begin{defn}
The inner product of two quaternion-valued functions $f$ and $g$ is defined as \cite{Bhat2026}
\begin{eqnarray*}
\langle f,g\rangle_{\mathbb{H}}
=
\int_{\mathbb{R}^{2}}
f(v)\,\overline{g(v)}\,d\lambda_\mu(v),
\end{eqnarray*}
where the weight measure $d\lambda_\mu(v)$ is given by \eqref{eq2.11}
\begin{eqnarray*}
d\lambda_\mu(v)
= \lambda_{\mu}(v_1)\lambda_{\mu}(v_2) dv_1 dv_2= |v_1|^{2\mu+1} |v_2|^{2\mu+1} \, dv_1 \, dv_2,
\quad \textit{where} \quad
v=(v_{1},v_{2})\in\mathbb{R}^{2}.
\end{eqnarray*}
\end{defn}
In particular, for $f=g$, we obtain the norm associated with the inner product
$\langle\cdot,\cdot\rangle_{\mathbb{H}}$ given by
\begin{eqnarray*}
\|f\|_{2,\mu}
=
\left(
\int_{\mathbb{R}^{2}}
|f(v)|^{2}
\,d\lambda_\mu(v)
\right)^{\frac12}.
\end{eqnarray*}

If $f \in L^2(\mathbb{R}^2, \mathbb{H})$ then define as
\begin{equation*}
L^2(\mathbb{R}^2, \mathbb{H})=\left\{ f| f :\mathbb{R}^2 \rightarrow \mathbb{H} ,\|f\|< \infty\right\}.
\end{equation*}
The space $C_0(\mathbb{R}^2, \mathbb{H})$ consists of all continuous quaternion-valued functions defined on $\mathbb{R}^2$ that vanish at infinity. It is defined as:
\begin{equation*}
C_0(\mathbb{R}^2, \mathbb{H}) = \left\{ f \in C(\mathbb{R}^2, \mathbb{H}) : \lim_{|v| \to \infty} |f(v)| = 0 \right\}.
\end{equation*}
\subsection{The Two-Sided Quaternionic Dunkl Transform}

Let $x, y \in \mathbb{R}$. For \\ $\mu \ge -\frac{1}{2}$, the normalized Bessel function of the first kind $j_\mu$ is defined by
\[
j_\mu(x) = \Gamma(\mu+1)\sum_{n=0}^{\infty} \frac{(-1)^n}{n!\,\Gamma(n+\mu+1)}
\left(\frac{x}{2}\right)^{2n}.
\]
Using $j_\mu$, the generalized quaternionic exponential function defined as \cite{Bhat2026}
\begin{eqnarray}
E_{\mu}(\textbf{q}x, y) = j_\mu(x y) + \frac{\textbf{q} x y}{2(\mu+1)} j_{\mu+1}(x y),
\end{eqnarray}
where $\textbf{q} \in \{ \textbf{i}, \textbf{j}\}$ denotes a quaternionic imaginary unit, and satisfies \cite{Rosler2003}
\begin{equation} \label{eq2.100}
E_\mu(\textbf{\textbf{i}}w,v)=E_\mu(v,\textbf{\textbf{i}}w),\,  E_{-1/2}(\textbf{\textbf{i}}w, v)=e^{\textbf{\textbf{i}}wv}\, \textit{and} \quad \,|E_\mu(\textbf{\textbf{i}}w,v)| \leq 1 .
\end{equation}
\begin{defn}
Two-Sided QDT 
For a function 
$f \in L^1_{\mu}(R^2,\mathbb{H})$  is followed by \cite{Tyr2025}
\begin{equation}\label{eq1.12}
\mathcal{ D}^{\mathbb{H}}_{\mu}[f](w_1, w_2) = c_{\mu}^2 \int_{\mathbb{R}^2} E_{\mu}(-\textbf{i} w_1, v_1) \, f(v_1, v_2) \, E_{\mu}(-\textbf{j} w_2, v_2) \, d\lambda_\mu(v), 
\end{equation}
where $E_{\mu}(-\textbf{\textbf{i}}w_1, v_1)$ and $E_{\mu}(-\textbf{\textbf{j}} w_2, v_2)$ are the associated Dunkl kernels.
\end{defn}
When $\mu=-1/2$, the Dunkl kernel reduces to the complex exponential, and the QDT
reduce the classical two-sided QFT \cite{mypaper}.
\begin{thm}
If $f \in L^1(\mathbb{R}^2,\mathbb{H})$ and $\mathcal{D}^{\mathbb{H}}_\mu f \in L^1(\mathbb{R}^2,\mathbb{H})$, 
then the inverse of two sided QDT is followed by \cite{Tyr2025}
\begin{equation}
f(v_1, v_2) = c_{\mu}^2 \int_{\mathbb{R}^2} E_{\mu}(\textbf{i}\, w_1, v_1) \, \mathcal{D}^{\mathbb{H}}_{\mu}[f](w_1, w_2) \, E_{\mu}(\textbf{j} w_2, v_2) \, d\lambda_\mu(w). \label{eq2.10}
\end{equation}
For $\mu=-1/2$, it becomes the inverse of classical two-sided QFT \cite{mypaper}.
\end{thm}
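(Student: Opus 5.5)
Since the two kernels act on opposite sides of $f$ and each involves a single imaginary unit, I would reduce the inversion formula \eqref{eq2.10} to the classical one-dimensional Dunkl inversion theorem on $\mathbb{R}$ with weight $|x|^{2\mu+1}dx$ \cite{Rosler2003, Dunkl1991}. That theorem says: if $g\in L^1_\mu(\mathbb{R})$ and its Dunkl transform $\mathcal{D}_\mu g(w)=c_\mu\int_{\mathbb{R}} E_\mu(-\textbf{i}w,v)g(v)\lambda_\mu(v)\,dv$ is in $L^1_\mu(\mathbb{R})$, then $g(v)=c_\mu\int_{\mathbb{R}} E_\mu(\textbf{i}w,v)\mathcal{D}_\mu g(w)\lambda_\mu(w)\,dw$ a.e. The same statement holds with $\textbf{i}$ replaced by $\textbf{j}$, because $\mathbb{R}+\textbf{j}\mathbb{R}\cong\mathbb{C}$. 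It also holds with the kernel on the right of a quaternion-valued function, since for fixed $\textbf{q}$ left and right multiplication by elements of $\mathbb{R}+\textbf{q}\mathbb{R}$ are both real-linear.

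The key steps are as follows.

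\textbf{Step 1: Fubini.} By \eqref{eq2.100}, $|E_\mu|\le 1$. Since $f\in L^1_\mu(\mathbb{R}^2,\mathbb{H})$, Fubini applies, and I write
\[
\mathcal{D}^{\mathbb{H}}_\mu[f](w_1,w_2)=c_\mu\int_{\mathbb{R}} E_\mu(-\textbf{i}w_1,v_1)\,F(v_1,w_2)\,\lambda_\mu(v_1)\,dv_1,
\]
where
\[
F(v_1,w_2)=c_\mu\int_{\mathbb{R}} f(v_1,v_2)\,E_\mu(-\textbf{j}w_2,v_2)\,\lambda_\mu(v_2)\,dv_2 .
\]

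\textbf{Step 2: left factor.} For a.e. fixed $w_2$, the function $v_1\mapsto F(v_1,w_2)$ is in $L^1_\mu(\mathbb{R})$. Its one-dimensional $\textbf{i}$-transform is $w_1\mapsto\mathcal{D}^{\mathbb{H}}_\mu[f](w_1,w_2)$. Writing $F=F_a+F_b\textbf{j}$ with $F_a,F_b$ taking values in $\mathbb{C}_{\textbf{i}}=\mathbb{R}+\textbf{i}\mathbb{R}$, left multiplication by $E_\mu(\mp\textbf{i}w_1,v_1)$ preserves this splitting. So the complex inversion theorem applies to $F_a$ and $F_b$ separately and gives
\[
F(v_1,w_2)=c_\mu\int_{\mathbb{R}} E_\mu(\textbf{i}w_1,v_1)\,\mathcal{D}^{\mathbb{H}}_\mu[f](w_1,w_2)\,\lambda_\mu(w_1)\,dw_1 .
\]

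\textbf{Step 3: right factor.} For a.e. fixed $v_1$, the map $w_2\mapsto F(v_1,w_2)$ is the right-sided $\textbf{j}$-Dunkl transform of $v_2\mapsto f(v_1,v_2)$. I decompose $f=f_a+\textbf{i}f_b$ with $f_a,f_b$ taking values in $\mathbb{C}_{\textbf{j}}$. Applying the one-dimensional inversion componentwise recovers $f(v_1,v_2)$ as $c_\mu\int F(v_1,w_2)E_\mu(\textbf{j}w_2,v_2)\lambda_\mu(w_2)\,dw_2$. Substituting Step 2 and interchanging integrals by Fubini then gives \eqref{eq2.10}. For $\mu=-1/2$, \eqref{eq2.100} turns the kernels into $e^{\pm\textbf{i}w_1v_1}$ and $e^{\pm\textbf{j}w_2v_2}$, which is the two-sided QFT inversion.

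\textbf{Main obstacle.} The hypotheses give integrability only of $f$ and $\mathcal{D}^{\mathbb{H}}_\mu f$ on $\mathbb{R}^2$. They do not directly give that the intermediate slices $w_1\mapsto\mathcal{D}^{\mathbb{H}}_\mu f(w_1,w_2)$ and $w_2\mapsto F(v_1,w_2)$ satisfy the one-dimensional theorem's requirement that the transform be integrable. Fubini gives integrability of $\mathcal{D}^{\mathbb{H}}_\mu f(\cdot,w_2)$ only for a.e. $w_2$, but $F(v_1,\cdot)$ need not be integrable.

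To avoid this, I would not iterate naively. Instead I would use a Gaussian summability argument. Multiply the integrand of \eqref{eq2.10} by $e^{-\varepsilon(w_1^2+w_2^2)}$, insert the definition of $\mathcal{D}^{\mathbb{H}}_\mu f$, and apply Fubini, which is now justified. Carry out the $w_1$ and $w_2$ integrals explicitly using the known Dunkl heat kernel (generalized Weierstrass formula) for each variable separately. This exhibits the regularized integral as $f$ convolved, with kernels acting on the left in $v_1$ and on the right in $v_2$, against a product approximate identity. It converges to $f$ in $L^1_\mu$, hence a.e. along a subsequence. Meanwhile, dominated convergence, using $\mathcal{D}^{\mathbb{H}}_\mu f\in L^1$ and $|E_\mu|\le1$, sends the left side to the right-hand side of \eqref{eq2.10}. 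Equating the two limits finishes the proof.
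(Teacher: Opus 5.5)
The paper gives no proof of this statement; it is imported from \cite{Tyr2025} as background. Your argument is therefore an independent proof, not a reconstruction of one in the paper.

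Your Gaussian-summability route is sound. The factor $e^{-\varepsilon|w|^2}$ makes the double integral absolutely convergent. The $w_1$- and $w_2$-integrations then detach from $f(u)$ by real-linearity of left and right multiplication.

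The fact that actually neutralises noncommutativity should be stated explicitly. The kernel $\Gamma_\varepsilon(x,y)=c_\mu^2\int_{\mathbb{R}}e^{-\varepsilon w^2}E_\mu(\textbf{i}w,x)E_\mu(-\textbf{i}w,y)\,|w|^{2\mu+1}\,dw$ is real, because the $\textbf{i}$-part of the integrand is odd in $w$. It is in fact the positive, unit-mass one-variable Dunkl heat kernel. The same holds on the $\textbf{j}$-side, where the two factors commute inside $\mathbb{R}+\textbf{j}\mathbb{R}$. So the regularised integral is just the product Dunkl heat semigroup acting on the four real components of $f$, and the left/right placement is immaterial.

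The one input you should cite rather than assert is that this semigroup is an approximate identity in $L^1_\mu$. The Dunkl heat kernel is not a translation-invariant Gaussian, so this is not the Euclidean argument; it is part of R\"osler's heat-semigroup theory. Compared with the paper's bare citation, your route gives a self-contained proof whose only inputs are one-variable Dunkl facts.

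The obstacle that made you abandon Steps 1--3 is only apparent. Set $G(v_1,w_2)=c_\mu\int_{\mathbb{R}}E_\mu(\textbf{i}w_1,v_1)\,\mathcal{D}^{\mathbb{H}}_\mu[f](w_1,w_2)\,|w_1|^{2\mu+1}\,dw_1$.
\begin{itemize}
\item Step 2, combined with Tonelli, says $F=G$ off a null subset of $\mathbb{R}^2$.
\item Meanwhile $|G(v_1,w_2)|\le c_\mu\int_{\mathbb{R}}|\mathcal{D}^{\mathbb{H}}_\mu[f](w_1,w_2)|\,|w_1|^{2\mu+1}\,dw_1$, which is an $L^1_\mu$ function of $w_2$ independent of $v_1$.
\item Hence $F(v_1,\cdot)\in L^1_\mu$ for a.e. $v_1$, and Step 3 applies.
\item A last Fubini, justified by $\mathcal{D}^{\mathbb{H}}_\mu f\in L^1_\mu$ and $|E_\mu|\le 1$, yields \eqref{eq2.10} almost everywhere.
\end{itemize}
That version uses only the classical one-dimensional inversion theorem as a black box and needs no heat-kernel facts.
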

\begin{thm} \label{thm2.2}
(Parseval's Formula) For all $f, g \in L^2_{\mu}(\mathbb{R}^2; \mathbb{H})$, the Parseval's formula for the two-sided QDT is defined as \cite{Bhat2026}
\begin{equation}\label{eq1.14}
\int_{\mathbb{R}^2} f(v_1, v_2) \overline{g(v_1, v_2)} \, d\lambda_\mu(v) 
= \int_{\mathbb{R}^2} \mathcal{D}^{\mathbb{H}}[f](w_1, w_2) \, \overline{\mathcal{D}^{\mathbb{H}}[g](w_1, w_2)} \, d\lambda_\mu(w),
\end{equation}
\end{thm}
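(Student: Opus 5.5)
The plan is to reduce the quaternionic identity to the classical one-dimensional Dunkl Plancherel theorem, applied once in each variable. Because the kernels sit on opposite sides of $f$, I will not commute quaternions. Instead I will use the splitting of $\mathbb{H}$ adapted to the two-sided transform. Write $f=f_{+}+f_{-}$ and $g=g_{+}+g_{-}$ with
\[
f_{\pm}=\tfrac12\bigl(f\pm \textbf{i}f\textbf{j}\bigr).
\]
A standard computation gives $\textbf{i}f_{\pm}\textbf{j}=\pm f_{\pm}$. This lets each piece be rewritten so that both kernels act as complex numbers in the single plane $\mathbb{C}_{\textbf{i}}$. In particular, $E_\mu(-\textbf{j}w_2,v_2)$ acting on the right of $f_{\pm}$ can be replaced by $E_\mu(\mp\textbf{i}w_2,v_2)$ acting on the left. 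In that form $\mathcal{D}^{\mathbb{H}}_\mu[f_\pm]$ is a two-dimensional Dunkl transform with commuting $\mathbb{C}_{\textbf{i}}$-valued kernel, with the sign of the second frequency flipped for the $-$ piece.

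First, I will prove the identity on a dense class, namely $f,g\in L^1_\mu\cap L^2_\mu$ (or the Schwartz space $\mathcal{S}(\mathbb{R}^2,\mathbb{H})$). On this class the double integrals converge absolutely, since $|E_\mu|\le 1$ by \eqref{eq2.100}. Second, I will show that the cross terms vanish: $\langle f_+,g_-\rangle=0$, and likewise $\langle \mathcal{D}^{\mathbb{H}}_\mu f_+,\mathcal{D}^{\mathbb{H}}_\mu g_-\rangle=0$, because the $\pm$ subspaces are orthogonal with respect to $\mathrm{Sc}(p\bar q)$. For the full quaternion-valued inner product I will instead decompose into the components along $\{1,\textbf{j}\}$ over $\mathbb{C}_{\textbf{i}}$, so that each piece becomes a pair of $\mathbb{C}_{\textbf{i}}$-valued functions. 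Third, for each matched piece I will apply the classical Plancherel theorem for the Dunkl transform on $\mathbb{R}$, with measure $c_\mu|x|^{2\mu+1}dx$, successively in $v_1$ and $v_2$ using Fubini. The reflection $w_2\mapsto -w_2$ appearing in the $-$ piece leaves $d\lambda_\mu$ invariant, since the weight is even, so it does not change the integral.

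Alternatively, and more directly, one can use the inversion formula \eqref{eq2.10}. For $f,g$ in the dense class, substitute \eqref{eq2.10} for $f$ into $\int f\bar g\,d\lambda_\mu$ and apply Fubini. The inner $v$-integral then reassembles $\overline{\mathcal{D}^{\mathbb{H}}_\mu g}$, using $\overline{E_\mu(\textbf{i}w,v)}=E_\mu(-\textbf{i}w,v)$, provided the order of the factors is handled correctly. I will use the cyclic property of $\mathrm{Sc}$ to establish the scalar part of the identity, and then recover the full quaternion identity by applying the scalar version to $f$ and to $f\textbf{q}$ for $\textbf{q}\in\{\textbf{i},\textbf{j},\textbf{k}\}$. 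Finally, I will extend from the dense class to all of $L^2_\mu(\mathbb{R}^2;\mathbb{H})$ by continuity. The case $f=g$ gives that $\mathcal{D}^{\mathbb{H}}_\mu$ is an isometry on the dense class, hence it extends uniquely, and then polarization gives the general identity.

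The main obstacle is the noncommutativity in the second step. The kernel $E_\mu(-\textbf{j}w_2,v_2)$ sits to the right of $f$ but, after conjugation, to the left of $\bar g$. As a result the full quaternion-valued product, as opposed to just its scalar part, does not telescope naively. I expect the $f_\pm$ splitting, together with the identity $\textbf{j}\,z=\bar z\,\textbf{j}$ for $z\in\mathbb{C}_{\textbf{i}}$, to be the cleanest way to resolve this. I will verify explicitly that the sign-flipped transforms of the $\pm$ pieces still pair correctly in the $\textbf{j}$-component.
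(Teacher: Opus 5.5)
The paper gives no proof of this statement; it only quotes it from the literature. Your proposal breaks down exactly at the obstacle you flag, and the break cannot be repaired. As a quaternion-valued identity the statement is false; only its $\mathbb{C}_{\mathbf{i}}$-part, and in particular its scalar part, is true.

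\textbf{The cross terms in your first route.} Write $f_+=a(1+\mathbf{i}\mathbf{j})$ and $g_-=b(1-\mathbf{i}\mathbf{j})$ with $a,b$ taking values in $\mathbb{C}_{\mathbf{i}}$. Pointwise, $f_+\overline{g_-}=2\,\mathbf{i}\,ab\,\mathbf{j}\in\mathrm{span}\{\mathbf{j},\mathbf{k}\}$. So orthogonality of the $\pm$ subspaces for $\mathrm{Sc}(p\bar q)$ only removes the scalar part. The cross term $\int f_+\overline{g_-}\,d\lambda_\mu=2\,\mathbf{i}\bigl(\int ab\,d\lambda_\mu\bigr)\mathbf{j}$ is nonzero in general. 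It is also $\mathbb{C}_{\mathbf{i}}$-bilinear rather than sesquilinear in $(a,b)$, so the complex Dunkl Plancherel theorem does not transport it.

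Let $T_\pm$ be the complex transforms with kernels $E_\mu(-\mathbf{i}w_1,v_1)E_\mu(\pm\mathbf{i}w_2,v_2)$ that act on the two pieces. (Incidentally, $f_\pm\mathbf{j}=\mp\mathbf{i}f_\pm$, so the sign flip sits on the $+$ piece, not the $-$ piece; this slip is harmless.) Then $\int\mathcal{D}^{\mathbb{H}}_\mu f_+\,\overline{\mathcal{D}^{\mathbb{H}}_\mu g_-}\,d\lambda_\mu=2\,\mathbf{i}\bigl(\int T_+a\,T_-b\,d\lambda_\mu\bigr)\mathbf{j}$. However, $\int T_+a\,T_-b\,d\lambda_\mu=\int a(v)\,b(-v_1,v_2)\,d\lambda_\mu(v)$. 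The reflection in $v_1$ does not cancel, so the verification promised in your last sentence cannot succeed.

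\textbf{Your second route.} It fails for the same reason. The $\mathbf{q}$-component of $\int f\bar g\,d\lambda_\mu$ is $\mathrm{Sc}\int(\bar{\mathbf{q}}f)\bar g\,d\lambda_\mu$; right multiplication $f\mathbf{q}$ produces components of $\int\bar g f$ instead. Moreover, $\mathcal{D}^{\mathbb{H}}_\mu$ commutes with left multiplication by $\mathbf{q}$ only when $\mathbf{q}\in\mathbb{C}_{\mathbf{i}}$. For $\mathbf{q}=\mathbf{j}$, the rule $z\mathbf{j}=\mathbf{j}\bar z$ for $z\in\mathbb{C}_{\mathbf{i}}$, together with $\overline{E_\mu(-\mathbf{i}w_1,v_1)}=E_\mu(-\mathbf{i}w_1,-v_1)$, gives $\mathcal{D}^{\mathbb{H}}_\mu[\mathbf{j}h]=\mathbf{j}\,\mathcal{D}^{\mathbb{H}}_\mu[h\circ\sigma_1]$ with $\sigma_1(v)=(-v_1,v_2)$.

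\textbf{A counterexample.} Let $h\in C_c^\infty(\mathbb{R}^2)$ be real-valued and nonzero with support in $\{v_1>0\}$, and take $f=\mathbf{j}h$, $g=h$.
The left side is $\|h\|_{2,\mu}^2\,\mathbf{j}$.
The right side is $\mathbf{j}X$ with $X=\int\mathcal{D}^{\mathbb{H}}_\mu[h\circ\sigma_1]\,\overline{\mathcal{D}^{\mathbb{H}}_\mu[h]}\,d\lambda_\mu$.
By the scalar identity (polarized Plancherel, which does hold), $\mathrm{Sc}\,X=\int h(-v_1,v_2)\,h(v_1,v_2)\,d\lambda_\mu(v)=0$.
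Hence $X\neq\|h\|_{2,\mu}^2$, and the two sides differ.
For $\mu=-\tfrac12$ this is the known fact that the two-sided QFT preserves only the scalar part of the quaternion inner product.

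\textbf{What is true.} The correct general formula is $\int\mathcal{D}^{\mathbb{H}}_\mu f\,\overline{\mathcal{D}^{\mathbb{H}}_\mu g}\,d\lambda_\mu=\pi\bigl(\int f\bar g\,d\lambda_\mu\bigr)+\pi^{\perp}\bigl(\int f\,\overline{g\circ\sigma_1}\,d\lambda_\mu\bigr)$, where $\pi$ is the projection of $\mathbb{H}$ onto $\mathrm{span}\{1,\mathbf{i}\}$. Your matched-piece computation, or the scalar identity combined with left $\mathbb{C}_{\mathbf{i}}$-linearity, followed by the density argument, proves exactly the $\pi$-part. That gives Plancherel and the scalar Parseval identity, which is the form in which the theorem should be stated.
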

\begin{thm}
(Plancherel's Formula) For all $f \in L^2_{\mu}(\mathbb{R}^2; \mathbb{H})$, the two-sided QDT satisfies \cite{Bhat2026}
\begin{equation}
\int_{\mathbb{R}^2} |f(v_1, v_2)|^2 \, d\lambda_\mu(v) = \int_{\mathbb{R}^2} \left| \mathcal{D}^{H}_{\mu}[f](w_1, w_2) \right|^2 \, d\lambda_\mu(w).
\end{equation}
\end{thm}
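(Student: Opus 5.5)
The Plancherel formula is the special case $g=f$ of the Parseval formula in Theorem~\ref{thm2.2}, so the plan is to take that theorem as given and read off the diagonal. For $f\in L^2_{\mu}(\mathbb{R}^2;\mathbb{H})$, put $g=f$ in \eqref{eq1.14}. For any quaternion $q$ we have $q\bar q=|q|^2$, so pointwise
\[
f(v_1,v_2)\overline{f(v_1,v_2)}=|f(v_1,v_2)|^2
\]
and
\[
\mathcal{D}^{\mathbb{H}}_\mu[f](w)\,\overline{\mathcal{D}^{\mathbb{H}}_\mu[f](w)}=|\mathcal{D}^{\mathbb{H}}_\mu[f](w)|^2 .
\]
Both integrands are therefore real and nonnegative, and integrating against $d\lambda_\mu$ gives the claimed identity at once. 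Noncommutativity causes no trouble here, because the product of a quaternion with its own conjugate is always the real scalar $|q|^2$.

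For a self-contained argument not relying on Theorem~\ref{thm2.2}, I would use the standard density route.

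\textbf{Step 1 (Schwartz functions).} Take $f\in\mathcal{S}(\mathbb{R}^2,\mathbb{H})$. Decompose $f=f_a+f_b\,\mathbf{j}$ with $f_a,f_b$ valued in $\mathbb{C}_{\mathbf{i}}=\mathrm{span}\{1,\mathbf{i}\}$.

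\textbf{Step 2 (reduce to one-dimensional transforms).} Use the identity $\mathbf{j}z=\bar z\mathbf{j}$ for $z\in\mathbb{C}_{\mathbf{i}}$. This commutes the $\mathbf{j}$-kernel $E_\mu(-\mathbf{j}w_2,v_2)$ past the $\mathbb{C}_{\mathbf{i}}$-valued pieces. The two-sided transform then becomes a combination of iterated one-dimensional Dunkl transforms, taken in $v_1$ and $v_2$, of complex-valued functions; in the $v_2$ variable some pieces appear with conjugated or reflected kernels.

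\textbf{Step 3 (apply classical Plancherel).} Use the classical one-dimensional Dunkl Plancherel theorem from \cite{Rosler2003} in each variable, with Fubini justified by the Schwartz decay. This shows that $|\mathcal{D}^{\mathbb{H}}_\mu f|^2$ integrates to $\|f_a\|^2+\|f_b\|^2=\|f\|_{2,\mu}^2$.

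\textbf{Step 4 (extend by density).} The map $f\mapsto\mathcal{D}^{\mathbb{H}}_\mu f$ is an isometry on the dense subspace $\mathcal{S}$, so it extends to all of $L^2_\mu$.

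The main obstacle in this self-contained route is Step 2. One must track how the right-hand $\mathbf{j}$-kernel acts on the $\mathbf{i}$-components. In particular, $E_\mu(-\mathbf{j}w_2,v_2)$ splits into an even part $j_\mu$ and an odd part carrying $\mathbf{j}$, and one must check that the cross terms cancel in the modulus. Since the statement explicitly allows assuming earlier results, the one-line specialisation of Theorem~\ref{thm2.2} is the proof I would present.
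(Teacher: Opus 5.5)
Your argument, setting $g=f$ in the Parseval formula of Theorem~\ref{thm2.2} and using $q\bar q=|q|^2$, is correct and matches the paper, which cites this result from \cite{Bhat2026} and obtains the QQPDT Plancherel formula from Parseval by the same specialisation. One caveat: for the two-sided transform the quaternion-valued identity \eqref{eq1.14} is not valid in all four components, since its $\mathbf{j}$- and $\mathbf{k}$-components in fact pair $f(v_1,v_2)$ with $g(-v_1,v_2)$; its scalar part is valid, however, and because both sides are real when $g=f$, that scalar part is all your reduction uses.
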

\section{Two-Sided QQPDT and Its Fundamental Properties}
This section is organized into two subsections. In the first subsection, we introduce the two-sided QQPDT and discuss its key special cases corresponding to real parameters $(a_1, b_1, c_1, d_1, e_1, a_2, b_2, c_2, d_2, e_2)$ for two dimensions with the condition $b_1, b_2 \neq 0$, together with the Dunkl indices $\mu \ge -\frac{1}{2}$. In the second subsection, we establish the fundamental properties of the QQPDT, including the inversion formula and Parseval’s identity, which confirm its internal consistency and energy-preserving nature. Throughout the paper, we use the notation $u = (u_1, u_2)$, $w = (w_1, w_2), \quad \textit{and} \quad v = (v_1, v_2) \in \mathbb{R}^2$.
\subsection{Definition and Special Cases of the Two-Sided QQPDT}
\begin{defn} 
For $f \in L^1_\mu(\mathbb{R}^2,\mathbb{H})$ the QQPDT is defined as 
\begin{eqnarray}\label{eq2.1}
\mathbb \mathbb{D}^{a,b,c}_{d,e,\mu}[f](\lambda) = c_{\mu}^2 \int_{\mathbb{R}^2} \Psi^{a_1, b_1, c_1}_{d_1, e_1, \mu}(w_1, v_1) ~f(v_1, v_2)~ \Psi^{a_2, b_2, c_2}_{d_2, e_2, \mu}(w_2, v_2)~d\lambda_\mu(v),   
\end{eqnarray}
where\quad $\Psi^{a_1, b_1, c_1}_{d_1, e_1, \mu}$ and $\Psi^{a_2, b_2, c_2}_{d_2, e_2, \mu}$ are denote the QQPD kernel, given by
\begin{eqnarray}\label{eq2.2}
\Psi^{a_1, b_1, c_1}_{d_1, e_1, \mu} (w_1, v_1) & = & \frac{ E_{\mu}(-\textbf {i}w_1 /{b_1}, v_1)}{(\textbf{i}\,b_1)^{\mu+1} }  e^{-\textbf{i}(a_1 v_1^2 + c_1  w_1^2 + d_1 v_1 + e_1  w_1)}  \\ 
\textit{and} \quad \Psi^{a_2, b_2, c_2}_{d_2, e_2, \mu}(w_2, v_2) & = &  \frac{ E_{\mu}(-\textbf {j}w_2/{b_2}, v_2)}{ (\textbf {\textbf{j}}\,b_2)^{\mu+1}}  e^{-\textbf{j}(a_2 v_2^2 + c_2  w_2^2 + d_2 v_2 + e_2  w_2)}, \label{eq2.32}
\end{eqnarray}
with $E_{\mu}(-\textbf{i}w_1/b_1, v_1)$ and $E_{\mu}(-\textbf{j}w_2/b_2, v_2)$ are the Dunkl kernels and $\textbf{i}, \textbf{j}$ are the unit quaternions.
\end{defn}
\textbf{Particular cases of   QQPDT:} 
\textbf{Case (\textbf{i}).} if ${\mu}=-\frac{1}{2}$, then under the transform $b_1\rightarrow \frac{1}{b_1}$ and $b_2\rightarrow\frac{1}{b_2}$ QQPDT reduce to QQPFT \cite{ mypaper}.\\
For ${\mu}=-\frac{1}{2}$ we have $c_{\mu}= \frac{1}{\sqrt{2\pi}}$, 
$E_{-1/2}(-\textbf{i}w_1 , v_1) = e^{-\textbf{i}w_1 v_1}$ and $ E_{-1/2}(-\textbf{j}w_2 b_2, v_2) = e^{-\textbf{j}w_2 v_2}$
then \eqref{eq2.1} becomes
\begin{eqnarray*}
\mathcal{Q}_{d,e,{\mu}}^{a,b,c} [f](w_1, w_2)
&=& \sqrt{\frac{b_1 \textbf{i}}{2\pi}}  \int_{\mathbb{R}^2} e^{-\textbf{i}(a_1 v_1^2 + c_1 w_1^2 + b_1 w_1 v_1 + d_1 v_1 + e_1 w_1)} \\
&\times& f(v_1, v_2)\,\sqrt{\frac{b_2 \textbf{j}}{2\pi}} e^{-\textbf{j}(a_2 v_2^2 + c_2 w_2^2 + b_2 w_2 v_2 + d_2 v_2 + e_2 w_2)}  \, dv_1 \, dv_2.
\end{eqnarray*}
\textbf{Case (II).} If $a_1 = c_1 = d_1 = e_1 = 0 ,b_1=1$ 
and $a_2 = c_2 = d_2 = e_2 = 0 ,b_2=1,$ then 
the QQPDT is reduced to quaternion Dunkl transform \cite{TWOSIDEDQDT}.

\textbf{Case (III).} If ${\mu} = -1/2$, $a_1 = c_1 =d_1 = e_1 = 0$, $b_1=1$, and $a_2 = c_2 =d_2 = e_2 = 0$, $b_2=1$, then QQPDT reduces to the \emph{quaternion  Fourier transform} \cite{mypaper}.

\textbf{Case (IV).} If ${\mu} = -1/2$, $d_1 = e_1 = d_2 = e_2 = 0$, $b_1,b_2 \neq 0$, under the transformation $a_1 \to -\frac{a_1}{2b_1}$ and $c_1 \to -\frac{c_1}{2b_1}$, $a_2 \to -\frac{a_2}{2b_2}$ and $c_2 \to -\frac{c_2}{2b_2}$, then the QQPDT reduces to the QLCT \cite{Bahari2019}.
\begin{eqnarray*}
\mathcal{Q}[f](w_1, w_2) &=& \frac{1}{\sqrt{2\pi \textbf{i} b_1}}  \int_{\mathbb{R}^2} e^{\textbf{i}\left(\frac{a_1 v_1^2 + c_1 w_1^2}{2b_1}\right)} E_{{\mu}}\left(-	\textbf{i}w_1/b_1, v_1\right) f(v_1, v_2) \\
&\times&  \frac{1}{\sqrt{2\pi \textbf{j} b_2}} E_{{\mu}}\left(-\textbf{j}w_2/b_2, v_2\right) e^{\textbf{j}\left(\frac{a_2 v_2^2 + c_2 w_2^2}{2b_2}\right)} \, dv_1 \, dv_2  \\
&=& \frac{1}{\sqrt{2\pi \textbf{i} b_1}}  \int_{\mathbb{R}^2} e^{\frac{\textbf{i}}{2b_1}\left({a_1 v_1^2 + c_1 w_1^2 - 2w_1 v_1}\right)}  f(v_1, v_2)\\&\times& \frac{1}{\sqrt{2\pi \textbf{j} b_2}} e^{\frac{\textbf{j}}{2b_2}\left({a_2 v_2^2 + c_2 w_2^2 - 2w_2 v_2}\right)} \, dv_1 \, dv_2.
\end{eqnarray*}

\textbf{Case(V).} If $ d_1 = d_2 = e_1 = e_2 = 0 $ and $a_1=c_1=-\frac{\cot\theta_1}{2}$, $a_2=c_2=-\frac{\cot\theta_2}{2}$
$b=\sin\theta$ where$(\theta \neq n\pi)$
then the QQPDT reduced to the QFrDT \cite{fractionalqdt}:
\begin{eqnarray*}
\mathcal{D}^{\theta}_{{\mu}}[f](w_1,w_2)
&=&A_{{\mu},\alpha_1}
\int_{\mathbb{R}^2}
e^{\frac{\textbf{i}}{2}(v_1^2+w_1^2)\cot\theta_1}
E_{{\mu}}\!\left(\frac{-	\textbf{i}w_1}{\sin\theta_1},v_1\right)
f(v_1,v_2) A_{{\mu},\theta_2} \\
&\times&
E_{{\mu}}\!\left(\frac{-\textbf{j}w_2}{\sin\theta_2},v_2\right)
e^{\frac{\textbf{j}}{2}(v_2^2+w_2^2)\cot\theta_2}
\,~d\lambda_\mu(v)
\end{eqnarray*}
\begin{eqnarray*}
\quad \textit{where} \quad  A_{{\mu},\theta_1} =\frac{e^{\textbf{i}({\mu}+1)(\hat{\theta}_1\pi/2-(\theta_1-2n\pi))}} {\Gamma({\mu}+1)\,(2|\sin\theta_1|)^{{\mu}+1}}, \quad where \quad \, \hat {\theta_1} =sgn(\sin\theta_1) \\ \textit{and} \quad
A_{{\mu},\theta_2} = \frac{e^{\textbf{j}({\mu}+1)(\hat{\theta}_2\pi/2-(\theta_2-2n\pi))}} {\Gamma({\mu}+1)\,(2|\sin\theta_2|)^{{\mu}+1}}, \quad where\quad \hat{\theta_2} =sgn(\sin\theta_2).
\end{eqnarray*}
\textbf{Case (VI).} If ${\mu} = -1/2$, $d_1 = e_1 = 0$, $d_2 = e_2 = 0$, and
\begin{eqnarray*}
a_1 &= c_1 = \frac{-\cot \theta_1}{2}, \quad b_1 = \csc \theta_1 \quad (\theta_1 \neq n\pi, n \in \mathbb{Z}), \\
a_2 &= c_2 = \frac{-\cot \theta_2}{2}, \quad b_2 = \csc \theta_2 \quad (\theta_2 \neq n\pi, n \in \mathbb{Z}),
\end{eqnarray*}
then the QQPDT (amplyfing by $\sqrt{1 - \textbf{i} \cot \theta_1}$ and $\sqrt{1 - \textbf{j} \cot \theta_2}$) reduce to the quaternion fractional Fourier transform (QFrFT) \cite{fractionalqdt}:
\begin{equation*}
\mathcal{F}^Q [f(v_1, v_2)](w_1, w_2) = \int_{\mathbb{R}^2} K_{\theta_1}(w_1, v_1) f(v_1, v_2) K_{\theta_2}(w_2, v_2) \, dv_1 \, dv_2,
\end{equation*}
where $K_{\theta_1}(w_1, v_1)$ and $K_{\theta_2}(w_2, v_2)$ are the kernels of QFrFT given by:
\begin{eqnarray*}
K_{\theta_1}(w_1, v_1) &= \sqrt{\frac{1 - \textbf{i} \cot \theta_1}{2\pi}} \exp \left\{ \frac{\textbf{i}}{2} (w_1^2 + v_1^2) \cot \theta_1 - \textbf{i} w_1 v_1 \csc \theta_1 \right\} \\ \textit{and} \quad
K_{\theta_2}(w_2, v_2) &= \sqrt{\frac{1 - \textbf{j} \cot \theta_2}{2\pi}} \exp \left\{ \frac{\textbf{j}}{2} (w_2^2 + v_2^2) \cot \theta_2 - \textbf{j} w_2 v_2 \csc \theta_2 \right\}
\end{eqnarray*}
The following case has not yet been explored. \\
\textbf{Case (VII).} If $d_1 = e_1 = 0$, $b_1,b_2 \neq 0$ and under the transformation:
\begin{equation*}
a_1 \to \frac{-a_1}{2b_1},\quad c_1 \to \frac{-c_1}{2b_1},
\quad \textit{and} \quad
a_2 \to \frac{-a_2}{2b_2},\quad c_2 \to \frac{-c_2}{2b_2}
\end{equation*}
then the QQPDT reduces to \emph{quaternion linear canonical Fourier-Bessel transform} defined as:
\begin{eqnarray*}
\mathcal{L}^B_{{\mu}} [f(v_1, v_2)](w_1, w_2) &=& c_{\mu}^2  \int_{\mathbb{R}^2} K_{{\mu}}(w_1, v_1) f(v_1, v_2) K_{{\mu}}(w_2, v_2) d\lambda_\mu(v), 
\end{eqnarray*}
 where $K_{\mu}$ and $K_{\mu}$ are the canonical Fourier-Bessel kernel, defined by 
\begin{eqnarray*}
K_{{\mu}}(w_1,v_1)
&=& \frac{1}{(\textbf{i}b_1)^{\mu+1}}
e^{\frac{\textbf{i}}{2}\left(\frac{a_1}{b_1}v_1^2+\frac{d_1}{b_1}w_1^2\right)}
J_{{\mu}}\!\left(\frac{w_1v_1}{b_1}\right),
\\ \textit{and} \quad
K_{{\mu}}(w_2,v_2)
&=& \frac{1}{(\textbf{j}b_2)^{\mu+1}} 
e^{\frac{\textbf{j}}{2}\left(\frac{a_2}{b_2}v_2^2+\frac{d_2}{b_2}w_2^2\right)}
J_{{\mu}}\!\left(\frac{w_2v_2}{b_2}\right).
\end{eqnarray*}
where $J_{\mu}$ is the sperical bessel function.

\subsection{Fundamental Properties of QQPDT}
\begin{lem}
Let $a_1,a_2, b_1, b_2, c_1, c_2, d_1, d_2, e_1, e_2 \in \mathbb{R}$, such that\\ $b_1 \neq 0$, $b_2 \neq 0$ with $\mu \geq -\tfrac{1}{2}$ then ,we have
\begin{enumerate}
\item The kernels of QQPDT are bounded 
\begin{eqnarray*}
\left|\Psi^{a_1,b_1,c_1}_{d_1,e_1,\mu}(w_1,v_1) \right| \leq \frac{1}{|b_1|^{\mu+1}} &\text{and}& \left| \Psi^{a_2,b_2,c_2}_{d_2,e_2,\mu}(w_2,v_2) \right| \leq \frac{1}{|b_2|^{\mu+1}} 
\end{eqnarray*}
\item{Conjugation property}
\begin{eqnarray*}
\overline{\Psi^{a_1, b_1, c_1}_{d_1, e_1, \mu} (w_1, v_1)}
&=& \Psi^{-a_1, -b_1, -c_1}_{-d_1, -e_1, \mu} (w_1, v_1) = \Psi^{-c_1,-b_1,-a_1}_{-e_1,-d_1,\mu}(v_1,w_1) \\
\text{and}\qquad \overline{\Psi^{a_2, b_2, c_2}_{d_2, e_2, \mu}(w_2, v_2)} &=& \Psi^{-a_2, -b_2, -c_2}_{-d_2, -e_2, \mu}(w_2, v_2) =\Psi^{-c_2,-b_2,-a_2}_{-e_2,-d_2,\mu}(v_2,w_2).\\
\end{eqnarray*}
\end{enumerate}
\end{lem}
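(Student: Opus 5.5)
The plan is to work directly from the definition of the kernels \eqref{eq2.2} and \eqref{eq2.32}. Each kernel is a product of three factors: a constant $(\textbf{i}b_1)^{-(\mu+1)}$ (resp. $(\textbf{j}b_2)^{-(\mu+1)}$), a Dunkl kernel, and a unimodular chirp exponential. Every factor lies in the commutative subalgebra $\mathbb{C}_{\textbf{i}}=\mathrm{span}_{\mathbb{R}}\{1,\textbf{i}\}$ (resp. $\mathbb{C}_{\textbf{j}}$), so no noncommutativity enters. For $E_\mu(-\textbf{i}w_1/b_1,v_1)$ this holds because it has the form $j_\mu+\textbf{i}\cdot(\text{real})\,j_{\mu+1}$ with real Bessel values. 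Both parts of the lemma can therefore be treated as statements about ordinary complex-valued functions, with $\textbf{i}$ or $\textbf{j}$ playing the role of the imaginary unit.

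\textbf{Boundedness.} I would use multiplicativity $|pq|=|p||q|$ of the quaternion norm. First, $|(\textbf{i}b_1)^{\mu+1}|=|b_1|^{\mu+1}$, since $|\textbf{i}|=1$ and the principal power of a unit in $\mathbb{C}_{\textbf{i}}$ stays unimodular. Next, $|e^{-\textbf{i}\theta}|=1$ for the real phase $\theta=a_1v_1^2+c_1w_1^2+d_1v_1+e_1w_1$. Finally, $|E_\mu(-\textbf{i}w_1/b_1,v_1)|\le 1$ by \eqref{eq2.100}, applied with the argument $w_1/b_1$ in place of $w$. Multiplying these gives $|\Psi|\le |b_1|^{-(\mu+1)}$, and the $\textbf{j}$-kernel is handled identically.

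\textbf{Conjugation.} Working in $\mathbb{C}_{\textbf{i}}$, conjugation is the field automorphism $\textbf{i}\mapsto-\textbf{i}$, so I would conjugate factor by factor.
\begin{itemize}
\item For the chirp, $\overline{e^{-\textbf{i}\theta}}=e^{\textbf{i}\theta}=e^{-\textbf{i}(-a_1v_1^2-c_1w_1^2-d_1v_1-e_1w_1)}$, which accounts for the sign flips of $a_1,c_1,d_1,e_1$.
\item For the Dunkl kernel, the explicit formula gives $\overline{E_\mu(-\textbf{i}x,y)}=E_\mu(\textbf{i}x,y)$, and $E_\mu(\textbf{i}w_1/b_1,v_1)=E_\mu(-\textbf{i}w_1/(-b_1),v_1)$ accounts for $b_1\mapsto-b_1$.
\item For the prefactor, I need $\overline{(\textbf{i}b_1)^{-(\mu+1)}}=(-\textbf{i}b_1)^{-(\mu+1)}=(\textbf{i}(-b_1))^{-(\mu+1)}$. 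This uses a fixed branch of the power that commutes with conjugation, e.g.\ $z^{\mu+1}=|z|^{\mu+1}e^{\textbf{i}(\mu+1)\arg z}$ with $\arg(\pm\textbf{i}|b_1|)=\pm\pi/2$.
\end{itemize}
Together these give $\overline{\Psi^{a_1,b_1,c_1}_{d_1,e_1,\mu}(w_1,v_1)}=\Psi^{-a_1,-b_1,-c_1}_{-d_1,-e_1,\mu}(w_1,v_1)$.

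For the second equality I would use the symmetry $E_\mu(\textbf{i}x,y)=E_\mu(\textbf{i}y,x)$, which depends only on the product $xy$ and is the content of \eqref{eq2.100}. This lets me swap the roles of $w_1$ and $v_1$ in the Dunkl kernel. Swapping them in the chirp interchanges $a_1\leftrightarrow c_1$ and $d_1\leftrightarrow e_1$, which gives $\Psi^{-c_1,-b_1,-a_1}_{-e_1,-d_1,\mu}(v_1,w_1)$.

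The main obstacle is the scaling inside the Dunkl kernel. The kernel carries $w_1/b_1$ in its first slot and $v_1$ in its second, so after the swap the argument reads $v_1/b_1$ paired with $w_1$, and the equality holds only because $E_\mu$ depends on the product $(w_1/b_1)v_1=(v_1/b_1)w_1$. I would make this explicit from the series formula $E_\mu(\textbf{q}x,y)=j_\mu(xy)+\frac{\textbf{q}xy}{2(\mu+1)}j_{\mu+1}(xy)$. The other point needing care is the branch convention for the complex power, which I would fix explicitly so that conjugation is compatible with $b_1\mapsto-b_1$. With these settled, the $\textbf{j}$-kernel follows verbatim.
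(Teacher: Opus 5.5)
Your proposal is correct and follows the same route as the paper, whose proof only says that the lemma follows directly from \eqref{eq2.100} together with the definitions \eqref{eq2.2} and \eqref{eq2.32}. Your factor-by-factor check inside the commutative subalgebra spanned by $1,\mathbf{i}$ (resp.\ $1,\mathbf{j}$) supplies the details the paper leaves implicit: the bound $|E_\mu|\le 1$, the evenness of $j_\mu$, the fact that $E_\mu$ depends only on the product $(w_1/b_1)v_1$, and a fixed principal branch so that $\overline{(\mathbf{i}b_1)^{\mu+1}}=(\mathbf{i}(-b_1))^{\mu+1}$.
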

\begin{proof}
The results follow immediately by applying \eqref{eq2.100}, \eqref{eq2.2} and \eqref{eq2.32}.
\end{proof}

\begin{prop} 
Let
$\alpha,\beta \in \mathbb{R}$
then for every $f,g \in L^1_\mu(\mathbb{R}^2,\mathbb{H})$, the two sided QQPDT satisfies the following properties:
\begin{enumerate}
\item \textbf{Linearity}
\begin{eqnarray*}
\mathbb{D}^{a,b,c}_{d,e,\mu}[\alpha f + \beta g](w)
=\alpha\,\mathbb{D}^{a,b,c}_{d,e,\mu}[f](w)+\beta\, \mathbb{D}^{a,b,c}_{d,e,\mu}[g](w).
\end{eqnarray*}
\item \textbf{Scaling}
Assume the scaling parameter $k > 0$ , and define
\[
f_k(v_1,v_2)=f(k v_1,k v_2).
\]
 Then
\begin{eqnarray*}  
\mathbb{D}^{a,b,c}_{d,e,\mu}[f_k](w_1,w_2)
&=&k^{-2\mu-2}
\mathbb{D}^{a',b',c}_{d',e,\mu}[f](w_1,w_2),
\end{eqnarray*}
where
\begin{eqnarray*}
a' =
\left(\frac{a_1}{k^2},\frac{a_2}{k^2}\right),
\quad
b' = (k b_1,k b_2),
\quad \textit{and} \quad
d' =\left(\frac{d_1}{k},\frac{d_2}{k}\right).
\end{eqnarray*}

\item \textbf{Modulation}
Let \(f\in L^1_\mu(\mathbb{R}^2,\mathbb{H})\) and let
\(\xi_1,\xi_2\in\mathbb{R}\). Then the quaternion modulation defined as $\mathcal{M}_{\xi_1,\xi_2}f(v_1,v_2)=e^{\mathbf{i}\xi_1v_1}
f(v_1,v_2)e^{\mathbf{j}\xi_2v_2}.$ Then
\begin{eqnarray*}
\mathbb{D}^{a,b,c}_{d,e,\mu}
\left[\mathcal{M}_{\xi_1,\xi_2}f\right](w_1,w_2)
=\mathbb{D}^{a,b,c}_{d',e,\mu}[f](w_1,w_2),
\end{eqnarray*}
where
\[
d'=(d_1-\xi_1,d_2-\xi_2).
\]
\end{enumerate}
\end{prop}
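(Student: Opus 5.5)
The plan is to verify all three identities by direct manipulation of the integral \eqref{eq2.1}. The only structural facts needed are these. First, real scalars commute with every quaternion. Second, for fixed arguments the left kernel $\Psi^{a_1,b_1,c_1}_{d_1,e_1,\mu}(w_1,v_1)$ lies in the commutative subalgebra $\mathbb{C}_{\mathbf{i}}=\mathrm{span}_{\mathbb{R}}\{1,\mathbf{i}\}$, and the right kernel lies in $\mathbb{C}_{\mathbf{j}}=\mathrm{span}_{\mathbb{R}}\{1,\mathbf{j}\}$. This follows from \eqref{eq2.2}–\eqref{eq2.32}, since $E_\mu(-\mathbf{i}w_1/b_1,v_1)=j_\mu(\cdot)-\mathbf{i}(\cdots)j_{\mu+1}(\cdot)$ has real coefficients. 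Convergence of every integral for $f,g\in L^1_\mu(\mathbb{R}^2,\mathbb{H})$ comes from the kernel bound in the preceding Lemma.

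\emph{Linearity.} Insert $\alpha f+\beta g$ into \eqref{eq2.1} and use $\Psi_1(\alpha f+\beta g)\Psi_2=\alpha\,\Psi_1 f\Psi_2+\beta\,\Psi_1 g\Psi_2$. This holds because $\alpha,\beta\in\mathbb{R}$ are central in $\mathbb{H}$. Linearity of the integral then finishes the argument. It is worth remarking that this step genuinely uses $\alpha,\beta$ real; for quaternion scalars only one-sided linearity would survive.

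\emph{Modulation.} In the integrand we get
\[
\Psi_1(w_1,v_1)\,e^{\mathbf{i}\xi_1v_1}f(v)\,e^{\mathbf{j}\xi_2v_2}\,\Psi_2(w_2,v_2).
\]
Since $e^{\mathbf{i}\xi_1v_1}$ and $\Psi_1$ both lie in $\mathbb{C}_{\mathbf{i}}$, they commute. Combining the exponential with $e^{-\mathbf{i}d_1v_1}$ gives $e^{-\mathbf{i}(d_1-\xi_1)v_1}$, which is exactly the kernel with $d_1$ replaced by $d_1-\xi_1$. The same argument in $\mathbb{C}_{\mathbf{j}}$ applies on the right, with $e^{\mathbf{j}\xi_2v_2}$ already placed adjacent to $\Psi_2$. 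This gives the modulation formula with $d'=(d_1-\xi_1,d_2-\xi_2)$.

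\emph{Scaling.} This is the only step requiring care, and I expect the power of $k$ to be the main obstacle. I would substitute $v_i=u_i/k$ in each coordinate. Then $dv_i\,|v_i|^{2\mu+1}=k^{-2\mu-2}\,du_i\,|u_i|^{2\mu+1}$, so $d\lambda_\mu(v)=k^{-4\mu-4}\,d\lambda_\mu(u)$. The chirp part becomes
\[
e^{-\mathbf{i}(a_1u_1^2/k^2+c_1w_1^2+d_1u_1/k+e_1w_1)},
\]
which yields $a_1'=a_1/k^2$ and $d_1'=d_1/k$. For the Dunkl factor I would use the homogeneity $E_\mu(\lambda x,y)=E_\mu(x,\lambda y)$, read off from the definition, where $E_\mu$ depends on $xy$ only. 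This gives $E_\mu(-\mathbf{i}w_1/b_1,u_1/k)=E_\mu(-\mathbf{i}w_1/(kb_1),u_1)$, so $b_1'=kb_1$. The normalization must then be rewritten as
\[
(\mathbf{i}b_1)^{-(\mu+1)}=k^{\mu+1}(\mathbf{i}kb_1)^{-(\mu+1)},
\]
which is legitimate for $k>0$ with the principal branch. The analogous steps apply in the second variable. Collecting the factors gives $k^{-4\mu-4}\cdot k^{2\mu+2}=k^{-2\mu-2}$, matching the claim. Throughout, $c,e$ are unchanged.
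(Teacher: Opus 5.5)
The paper states this proposition without any proof, so there is no argument of the authors to compare with. Your direct verification from the defining integral is correct, and it is evidently the intended computation.

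The scaling bookkeeping is right. The substitution $u=kv$ gives $d\lambda_\mu(v)=k^{-4\mu-4}\,d\lambda_\mu(u)$. The identity $E_\mu(-\mathbf{i}w_1/b_1,u_1/k)=E_\mu(-\mathbf{i}w_1/(kb_1),u_1)$, together with $(\mathbf{i}b_1)^{-(\mu+1)}=k^{\mu+1}(\mathbf{i}kb_1)^{-(\mu+1)}$, yields a factor $k^{\mu+1}$ from each kernel. The normalization identity is legitimate because $k>0$ does not change the argument of $\mathbf{i}b_1$. The total factor $k^{-2\mu-2}$ therefore matches the statement.

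In the modulation step, the only genuine commutation needed is moving $e^{\mathbf{j}\xi_2v_2}$ past the $\mathbb{C}_{\mathbf{j}}$-valued factor $E_\mu(-\mathbf{j}w_2/b_2,v_2)(\mathbf{j}b_2)^{-(\mu+1)}$ of $\Psi_2$, and you account for it. On the left, $e^{\mathbf{i}\xi_1v_1}$ already sits next to the chirp factor of $\Psi_1$.

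You leave one point implicit: $f_k$ and $\mathcal{M}_{\xi_1,\xi_2}f$ must lie in $L^1_\mu(\mathbb{R}^2,\mathbb{H})$ for the left-hand sides to be defined. This is immediate from the same substitution, which gives $\|f_k\|_{1,\mu}=k^{-4\mu-4}\|f\|_{1,\mu}$ with respect to $d\lambda_\mu$, and from $|\mathcal{M}_{\xi_1,\xi_2}f|=|f|$.
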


\begin{thm} \label{th3.1}
\begin{enumerate}
\item If $f \in L^1_{\mu}(\mathbb{R}^2,\mathbb{H})$, then the mapping
\begin{eqnarray*}
w \longmapsto \mathbb{D}^{a,b,c}_{d,e,\mu}[f](w)
\end{eqnarray*}
is continuous on $\mathbb{R}^2$ and we have
\begin{equation}
\left| \mathbb{D}^{a,b,c}_{d,e,\mu}[f](w) \right|
\leq\frac{c_{\mu}^2 }{|b_1 b_2|^{\mu+1}} \, \|f\|_{1,\mu}.
\end{equation}
\item The two sided QQPDT can be characterized through the quaternion Dunkl transform. In particular, for any function $f \in \mathcal{S}(\mathbb{R}^2,\mathbb{H})$,
 it can be written as
\begin{equation}\label{eq3.2}
\mathbb{D}^{a,b,c}_{d,e,\mu}[f](w)
= \frac{ e^{-\textbf{i}\left(c_1w_1^2+e_1w_1\right)} }{(\textbf{i}b_1)^{\mu+1}}
\mathcal{D}^{\mathbb{H}} _{\mu}(g)\!\left(\frac{w_1}{b_1},\frac{w_2}{b_2}\right) \frac{e^{-\textbf{j}\left(c_2 w_2^{2}+e_2 w_2\right)}}{(\textbf{j}b_2)^{\mu+1}}
\end{equation}
where
$g(v_1,v_2)=e^{-\textbf{i}\left(a_1v_1^{2}+d_1v_1\right)}f(v_1,v_2)e^{-\textbf{j}\left(a_2v_2^{2}+d_2v_2\right)}.$    
\end{enumerate}
\end{thm}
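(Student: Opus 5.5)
For part (1), I would bound the integrand pointwise and then control continuity by dominated convergence. By the boundedness part of the preceding Lemma, which rests on $|E_\mu(\mathbf{i}w,v)|\le 1$ from \eqref{eq2.100} and $|e^{-\mathbf{i}\theta}|=1$, the two kernels satisfy $|\Psi^{a_1,b_1,c_1}_{d_1,e_1,\mu}(w_1,v_1)|\le |b_1|^{-\mu-1}$ and the analogous bound with $b_2$. Since quaternion moduli are multiplicative, $|pq|=|p||q|$, the modulus of the integrand in \eqref{eq2.1} is at most $|b_1b_2|^{-\mu-1}|f(v)|$. Integrating against $d\lambda_\mu$ then gives the stated bound $c_\mu^2|b_1b_2|^{-\mu-1}\|f\|_{1,\mu}$.

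For continuity, I take $w^{(n)}\to w$ in $\mathbb{R}^2$. Each kernel is continuous in $w_\ell$: the chirp factors are exponentials of polynomials, and $E_\mu(-\mathbf{i}w_1/b_1,v_1)=j_\mu(w_1v_1/b_1)-\mathbf{i}\frac{w_1v_1}{2b_1(\mu+1)}j_{\mu+1}(w_1v_1/b_1)$ is built from entire power series. So the integrand converges pointwise in $v$. It is dominated by the integrable function $|b_1b_2|^{-\mu-1}|f(v)|$. Applying the dominated convergence theorem componentwise to the four real components then yields $\mathbb{D}^{a,b,c}_{d,e,\mu}[f](w^{(n)})\to\mathbb{D}^{a,b,c}_{d,e,\mu}[f](w)$.

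For part (2), I would factor the kernels and move the $w$-dependent and $v$-dependent chirps to the correct sides. The key observation is that everything in $\Psi^{a_1,b_1,c_1}_{d_1,e_1,\mu}$ lies in the commutative subalgebra $\mathbb{R}+\mathbf{i}\mathbb{R}\cong\mathbb{C}$, namely the factor $(\mathbf{i}b_1)^{-\mu-1}$, the chirps $e^{-\mathbf{i}(c_1w_1^2+e_1w_1)}$ and $e^{-\mathbf{i}(a_1v_1^2+d_1v_1)}$, and $E_\mu(-\mathbf{i}w_1/b_1,v_1)$. These factors therefore commute with one another. Likewise every factor of $\Psi^{a_2,b_2,c_2}_{d_2,e_2,\mu}$ lies in $\mathbb{R}+\mathbf{j}\mathbb{R}$ and they commute among themselves. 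Using this, I will write
\[
\Psi^{a_1,b_1,c_1}_{d_1,e_1,\mu}(w_1,v_1)=\frac{e^{-\mathbf{i}(c_1w_1^2+e_1w_1)}}{(\mathbf{i}b_1)^{\mu+1}}\,E_\mu(-\mathbf{i}w_1/b_1,v_1)\,e^{-\mathbf{i}(a_1v_1^2+d_1v_1)},
\]
and write $\Psi^{a_2,b_2,c_2}_{d_2,e_2,\mu}$ symmetrically, with the $v_2$-chirp on the left and the $w_2$-dependent factors on the right.

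Substituting these factorisations into \eqref{eq2.1}, the $w$-dependent prefactors on the far left and far right do not depend on $v$. They can therefore be pulled out of the integral, respecting the left/right order, which is legitimate because the integral is $\mathbb{R}$-linear. What remains inside is $c_\mu^2\int E_\mu(-\mathbf{i}w_1/b_1,v_1)\,g(v)\,E_\mu(-\mathbf{j}w_2/b_2,v_2)\,d\lambda_\mu(v)$, with $g$ exactly as in the statement. By \eqref{eq1.12} this equals $\mathcal{D}^{\mathbb{H}}_\mu(g)(w_1/b_1,w_2/b_2)$, which gives \eqref{eq3.2}. Since $|g|=|f|$, $g$ is integrable whenever $f\in\mathcal{S}(\mathbb{R}^2,\mathbb{H})$, or even merely $f\in L^1_\mu$, so the QDT of $g$ is well defined.

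The main obstacle is the noncommutativity bookkeeping. Each $\mathbf{i}$-factor must stay on the left of $f$ and each $\mathbf{j}$-factor on the right, and one only ever commutes factors within the same complex subalgebra. A related point is to fix a branch of $(\mathbf{i}b_1)^{\mu+1}$, for example via $\mathbf{i}=e^{\mathbf{i}\pi/2}$ and $|b_1|^{\mu+1}$ times a unit in $\mathbb{C}_\mathbf{i}$. With that choice it visibly commutes with the other $\mathbf{i}$-factors, and its modulus is $|b_1|^{\mu+1}$, which is what the bound in part (1) uses.
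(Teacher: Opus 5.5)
Your proposal is correct and follows the same route as the paper. Part (1) uses the kernel bounds $|\Psi|\le |b_\ell|^{-\mu-1}$ and the continuity of the kernels in $w$; you make the dominated-convergence step explicit, which the paper leaves implicit. Part (2) commutes factors within $\mathbb{C}_{\mathbf{i}}$ and $\mathbb{C}_{\mathbf{j}}$, pulls the $w$-dependent chirps and the constants $(\mathbf{i}b_1)^{-\mu-1}$, $(\mathbf{j}b_2)^{-\mu-1}$ out to the left and right, and recognizes $\mathcal{D}^{\mathbb{H}}_\mu(g)(w_1/b_1,w_2/b_2)$.

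Your bookkeeping is cleaner than the paper's displayed computation, which has a sign slip in the $v_1$-chirp ($e^{+\mathbf{i}(a_1v_1^2+d_1v_1)}$ inside the bracket) and stray factors of $k$. Your remark that the identity holds for all $f\in L^1_\mu$, not only Schwartz $f$, is also correct, since $|g|=|f|$.
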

\begin{proof}
\begin{enumerate}
\item Let $f\in L^1_{\mu}(\mathbb{R}^2,\mathbb{H})$. Since
$E_\mu(-\textbf{i}w,v)\leq 1,$
the mapping $w \longmapsto E_{\mu}(\textbf{i}w,v)$ is continuous on $\mathbb{R}^2$, hence the mapping $w \longmapsto \mathbb{D}^{a,b,c}_{d,e,\mu}[f](w)$
is also continuous for every $w \in \mathbb{R}^2$, we have
\begin{eqnarray*}
\left| \mathbb{D}^{a,b,c}_{d,e,\mu}[f](w) \right|
&=&\Big| c_{\mu}^2 
\int_{\mathbb{R}^2} \Psi^{a_1, b_1, c_1}_{d_1, e_1, \mu}(w_1, v_1) f(v_1, v_2) \Psi^{a_2, b_2, c_2}_{d_2, e_2, \mu}(w_2, v_2)~ d\lambda_\mu(v)
\Big| \\
&\leq&\frac{c_{\mu}^2}{|b_1|^{\mu+1}|b_2|^{\mu+1}}
\int_{\mathbb{R}^2}
\left|  f(v_1, v_2)\right|~  d\lambda_\mu(v) \\
&\leq & \frac{c_{\mu}^2}{|b_1 b_2|^{\mu+1}}
\|f\|_{1,\mu}.
\end{eqnarray*}
\item Let $f \in \mathcal{S}(\mathbb{R}^2,\mathbb{H})$, Then we have
\begin{eqnarray*}
\mathbb{D}^{a,b,c}_{d,e,\mu}[f](w)
&=& c_{\mu}^2
\int_{\mathbb{R}^2} \Psi^{a_1, b_1, c_1}_{d_1, e_1, \mu}(w_1, v_1) f(v_1, v_2) \Psi^{a_2, b_2, c_2}_{d_2, e_2, \mu}(w_2, v_2) d\lambda_\mu(v)  \\
&=&c_{\mu}^2 \int_{\mathbb{R}^2} \frac{E_{\mu}(-\textbf{i}kw_1/b_1, v_1)}{(\textbf{i}b_1)^{\mu+1}} e^{-\textbf{i}(a_1 v_1^2 + c_1 k w_1^2 + d_1 v_1 + e_1 k w_1)}  \\
&\times&f(v_1, v_2) \frac{E_{\mu}(-\textbf{j}kw_2/b_2, v_2)}{(\textbf{j}b_2)^{\mu+1}} e^{-\textbf{j}(a_2 v_2^2 + c_2 k w_2^2 + d_2 v_2 + e_2 k w_2)} d\lambda_\mu(v)  \\
&=&\frac{e^{-\textbf{i}\left(c_1w_1^2+e_1w_1\right)}}{(\textbf{i}b_1)^{\mu+1}}
\Big[c_{\mu}^2
\int_{\mathbb{R}^2}
E_{\mu}\left(-	\textbf{i}w_1/b_1,v_1\right) e^{\textbf{i}\left(a_1v_1^{2}+d_1v_1\right)} f(v_1,v_2) 
  \\
&\times&  e^{-\textbf{j}\left(a_2v_2^{2}+d_2 v_2\right)} E_{\mu}\left(-\textbf{j}w_2/b_2,v_2\right)  d\lambda_\mu(v)  \Big] \frac{ e^{-\textbf{j}\left(c_2 w_2^{2}+e_2 w_2\right)}}{(\textbf{j}b_2)^{\mu+1}}   \\
&=& \frac{ e^{-\textbf{i}\left(c_1w_1^2+e_1w_1\right)} }{(\textbf{i}b_1)^{\mu+1}}
\mathcal{D}^{\mathbb{H}} _{\mu}(g)\!\left(\frac{w_1}{b_1},\frac{w_2}{b_2}\right) \frac{e^{-\textbf{j}\left(c_2 w_2^{2}+e_2 w_2\right)}}{(\textbf{j}b_2)^{\mu+1}},
\end{eqnarray*}
where
$g(v_1,v_2)=e^{-\textbf{i}\left(a_1v_1^{2}+d_1v_1\right)} f(v_1,v_2) e^{-\textbf{j}\left(a_2v_2^{2}+d_2v_2\right)}. $
\end{enumerate}
\end{proof}
\begin{prop}  
[Riemann Lebesgue lemma for the QQPDT] 
For every $f \in L^1_\mu(\mathbb{R}^2,\mathbb{H})$, the
QQPDT $\mathbb{D}^{a,b,c}_{d,e,\mu}[f] \in C_0(\mathbb{R}^2,\mathbb{H})$ and satisfies
\begin{equation}
\left\|
\mathbb{D}^{a,b,c}_{d,e,\mu}[f]
\right\|_{\infty,\mu}
\le
\frac{c_{\mu}^2}{|b_1 b_2|^{\mu+1}} \, \|f\|_{1,\mu}.
\end{equation}
\end{prop}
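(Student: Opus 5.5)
The sup-norm bound is already contained in Theorem~\ref{th3.1}(1): the kernel estimates of the preceding lemma give $|\mathbb{D}^{a,b,c}_{d,e,\mu}[f](w)|\le c_\mu^2|b_1b_2|^{-(\mu+1)}\|f\|_{1,\mu}$ for every $w$, and taking the supremum over $w$ yields the stated inequality. The same theorem also gives continuity. It therefore remains only to prove that $\mathbb{D}^{a,b,c}_{d,e,\mu}[f](w)\to 0$ as $|w|\to\infty$. I will do this by a density argument combined with the factorization \eqref{eq3.2}.

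\textbf{Step 1 (Schwartz functions).} Take $f\in\mathcal{S}(\mathbb{R}^2,\mathbb{H})$. By Theorem~\ref{th3.1}(2),
\[
\bigl|\mathbb{D}^{a,b,c}_{d,e,\mu}[f](w)\bigr| = \frac{1}{|b_1b_2|^{\mu+1}}\,\Bigl|\mathcal{D}^{\mathbb{H}}_{\mu}(g)\Bigl(\frac{w_1}{b_1},\frac{w_2}{b_2}\Bigr)\Bigr|,
\]
because the chirp factors $e^{-\textbf{i}(\cdot)}$ and $e^{-\textbf{j}(\cdot)}$ have modulus one and the quaternion modulus is multiplicative. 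The function $g$ is $f$ multiplied on the left and right by unimodular factors, so $|g|=|f|$ and $g\in L^1_\mu$. I will then invoke the Riemann--Lebesgue lemma for the two-sided quaternion Dunkl transform: $\mathcal{D}^{\mathbb{H}}_\mu(g)\in C_0$. To justify this, decompose $g=g_1+\textbf{i}g_2+\textbf{j}g_3+\textbf{k}g_4$ into real components. Expand each kernel $E_\mu(-\textbf{i}w_1,v_1)$ as its even part $j_\mu(w_1v_1)$ plus its odd part, which is the unit $\textbf{i}$ times a real function. Each piece is then a real Dunkl (Hankel-type) transform in each variable separately, and the classical one-dimensional Dunkl Riemann--Lebesgue lemma of \cite{Rosler2003} gives decay in $w_1$ and in $w_2$. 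Since $|w|\to\infty$ forces $|w_1/b_1|$ or $|w_2/b_2|$ to tend to infinity, the QQPDT of $f$ vanishes at infinity.

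\textbf{Step 2 (density).} For general $f\in L^1_\mu$, choose $f_n\in\mathcal{S}(\mathbb{R}^2,\mathbb{H})$ with $\|f_n-f\|_{1,\mu}\to0$. Compactly supported smooth functions are dense in $L^1$ of the measure $d\lambda_\mu$, which is locally finite since $2\mu+1\ge0$. By linearity and the uniform bound above,
\[
\sup_w\bigl|\mathbb{D}[f_n](w)-\mathbb{D}[f](w)\bigr|\le c_\mu^2|b_1b_2|^{-(\mu+1)}\|f_n-f\|_{1,\mu}\to0 .
\]
So $\mathbb{D}[f]$ is a uniform limit of functions in $C_0$, and $C_0$ is closed under uniform convergence; hence $\mathbb{D}[f]\in C_0(\mathbb{R}^2,\mathbb{H})$.

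\textbf{Main obstacle.} The delicate point is Step 1: establishing that the two-sided quaternion Dunkl transform itself maps into $C_0$. Decay in only one variable is not enough; I need joint vanishing as $|w|\to\infty$. To handle this uniformly, I would first apply the density reduction to tensor-product functions $\phi_1(v_1)\phi_2(v_2)$ with quaternion coefficients. For such functions the transform factors as a product of one-dimensional Dunkl transforms, each lying in $C_0(\mathbb{R})$ and bounded. A product $F_1(w_1)F_2(w_2)$ of bounded $C_0$ functions is in $C_0(\mathbb{R}^2)$. Finite linear combinations of such tensor products are dense in $L^1_\mu$, and the uniform bound then finishes the argument exactly as in Step 2. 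If that reduction is used, Step 1 can bypass \eqref{eq3.2} entirely and apply directly to the QQPDT kernel. For a tensor product, the quadratic-phase factors in $v$ are absorbed into $\phi_1$ and $\phi_2$, while those in $w$ have modulus one.
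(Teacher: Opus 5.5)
Your proposal is correct and follows the paper's proof: the bound and continuity come from Theorem~\ref{th3.1}, decay at infinity is obtained for Schwartz $f$ via the factorization \eqref{eq3.2} and the Riemann--Lebesgue lemma for the two-sided quaternion Dunkl transform, and the general case follows by density in $L^1_\mu$ together with the uniform bound. The one difference is that the paper simply invokes the Riemann--Lebesgue lemma for $\mathcal{D}^{\mathbb{H}}_\mu$, whereas you justify it. Your tensor-product reduction correctly supplies the joint decay that the one-variable-at-a-time sketch does not give on its own; that sketch could also be repaired, since Fubini and dominated convergence make the decay in $w_1$ uniform in $w_2$, and vice versa.
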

\begin{proof}
Let $f \in  L^1_\mu(\mathbb{R}^2, \mathbb{H})$. From Theorem \ref{th3.1}, we know that the transform
$\mathbb{D}^{a,b,c}_{d,e,\mu}[f]$ is continuous on $\mathbb{R}^2$ and
bounded by
$\frac{c_{\mu}^2}{|b_1 b_2|^{\mu+1}} \, \|f\|_{1,\mu}$.
To show that it converges at infinity, let
$f \in \mathcal{S}(\mathbb{R}^2,\mathbb{H})$. From theorem \ref{th3.1}, we have
\begin{equation*}
\mathbb{D}^{a,b,c}_{d,e,\mu}[f](w)
= \frac{ e^{-\textbf{i}\left(c_1w_1^2+e_1w_1\right)} }{(\textbf{i}b_1)^{\mu+1}}
\mathcal{D}^{\mathbb{H}} _{\mu}(g)\!\left(\frac{w_1}{b_1},\frac{w_2}{b_2}\right) \frac{e^{-\textbf{j}\left(c_2 w_2^{2}+e_2 w_2\right)}}{(\textbf{j}b_2)^{\mu+1}},
\end{equation*}
where
\begin{eqnarray*}
g(v_1,v_2)=e^{-\textbf{i}\left(a_1v_1^{2}+d_1v_1\right)}f(v_1,v_2)e^{-\textbf{j}\left(a_2v_2^{2}+d_2v_2\right)}.
\end{eqnarray*}
Since $f \in \mathcal{S}(\mathbb{R}^2,\mathbb{H})$, it follows that
$g \in \mathcal{S}(\mathbb{R}^2,\mathbb{H})$. By Riemann--Lebesgue lemma
for the quaternion Dunkl transform, we have
$\mathcal{D}^{\mathbb{H}}_{\mu}(g) \in C_0(\mathbb{R}^2,\mathbb{H})$.
The expression $\mathcal{D}_{\mu}(g)(w/b)$, after multiplication by the bounded continuous functions
$e^{-\textbf{i}\left(c_1w_1^{2}+e_1 w_1\right)}$
and $e^{-\textbf{j}\left(c_2w_2^{2}+e_2 w_2\right)}$, continues to vanish at infinity.
Hence,$\mathbb{D}^{a,b,c}_{d,e,\mu}[f] \in C_0(\mathbb{R}^2,\mathbb{H}).$\\
For $f \in \mathcal{S}(\mathbb{R}^2,\mathbb{H})$.
Now, in general $f \in L^1_\mu(\mathbb{R}^2,\mathbb{H})$ and  let
$\{f_n\} \subset  \mathcal{S}(\mathbb{R}^2,\mathbb{H})$ be a sequence such that
\begin{eqnarray*}
\|f_n - f\|_{1,\mu} \longrightarrow 0
\quad \text{as } n \to \infty.
\end{eqnarray*}
Then we have
\begin{eqnarray*}
\left\|
\mathbb{D}^{a,b,c}_{d,e,\mu}[f_n]
-\mathbb{D}^{a,b,c}_{d,e,\mu}[f]
\right\|_{\infty, \mu}
\le\frac{c_{\mu}^2}{|b_1 b_2|^{\mu+1}} \, \|f_n-f\|_{1,\mu}
\longrightarrow 0 .
\end{eqnarray*}
which completes the proof.
\end{proof}   

\begin{thm}[Inversion property of QQPDT] Let f be a function belonging to $L^1_\mu(\mathbb{R}^2,\mathbb{H})$ and $\mathbb \mathbb {D}^{a, b, c}_{d, e, {\mu}} [f] \in L^1_\mu(\mathbb{R}^2,\mathbb{H})$ then the inverse of the QQPDT is given by
\begin{eqnarray} 
\mathbb D^{-c, -b, -a}_{-e, -d, {\mu}} \left( \mathbb {D}^{a, b, c}_{d, e, {\mu}} [f(u)](w) \right)(v_1,v_2)
= f(v_1, v_2) 
\end{eqnarray}
Equivalently 
\begin{eqnarray}
  c_{\mu}^2 \int_{\mathbb{R}^2} \overline{\Psi^{a_1, b_1, c_1}_{d_1, e_1, \mu}(w_1, v_1)}  \, \mathbb{D}^{a,b,c}_{d,e,\mu}[f](w_1,w_2) \, \overline{\Psi^{a_2, b_2, c_2}_{d_2, e_2, \mu}(w_2, w_2)} \notag~ d\lambda_\mu(w)=f(v_1, v_2), 
\end{eqnarray}
\begin{proof} Let $f \in L^1_\mu(\mathbb{R}^2,\mathbb{H})$ and $ \mathbb {D}^{a, b, c}_{d, e, {\mu}} [f] \in L^1_\mu(\mathbb{R}^2,\mathbb{H})$ then, we have
\begin{eqnarray*}
&&\mathbb D^{-c, -b, -a}_{-e, -d, {\mu}}  \left( \mathbb {D}^{a, b, c}_{d, e, {\mu}} [f(u)](w) \right)(v) \\
&=& c_{\mu}^2  \int_{\mathbb{R}^2} \Psi^{-c_1, -b_1, -a_1}_{-e_1, -d_1, {\mu}}(v_1, w_1) 
\left( \mathbb {D}^{a, b, c}_{d, e, {\mu}}[f(u)](w_1, w_2) \right) 
 \Psi^{-c_2, -b_2, -a_2}_{-e_2, -d_2, {\mu}}(v_2, w_2) \, d\lambda_\mu(w)
\\
&=& c_{\mu}^2
\int_{\mathbb{R}^2} \Psi^{-c_1, -b_1, -a_1}_{-e_1, -d_1, {\mu}}(v_1, w_1) \int_{\mathbb{R}^2} \Psi^{a_1, b_1, c_1}_{d_1, e_1, {\mu}}(w_1, u_1)  f(u_1, u_2) 
\Psi^{a_2, b_2, c_2}_{d_2, e_2, {\mu}}(w_2, u_2)  \,d\lambda_\mu(u) \\
&\times&  \Psi^{-c_2, -b_2, -a_2}_{-e_2, -d_2, {\mu}}(v_2, w_2) \, d\lambda_\mu(w) 
\end{eqnarray*}
\\
\begin{eqnarray*}
&=&\frac{c_{\mu}^2 c_{\mu}^2}{|b_1|^{2{\mu}+2} |b_2|^{2{\mu}+2}} \int_{\mathbb{R}^2} E_{\mu}(\textbf{i} w_1/b_1, v_1) e^{\textbf{i}({c_1 w_1^2} + a_1 v_1^2 + {e_1 w_1} + d_1 v_1)} \Big[ \int_{\mathbb{R}^2} E_{\mu}(-\textbf{i} w_1/b_1, u_1) \\  
 &\times& \left. e^{-\textbf{i}(a_1 u_1^2 + {c_1 w_1^2} + d_1 u_1 + {e_1 w_1})} f(u_1, u_2)  ~E_{\mu}(-\textbf{j} w_2/b_2, u_2) ~ e^{-\textbf{j}(a_2 u_2^2 + {c_2 w_2^2} + d_2 u_2 + {e_2 w_2})} \right] \\
&\times &  E_{\mu}(\textbf{j} w_2/b_2, v_2) ~e^{\textbf{j}({c_2 w_2^2} + a_2 v_2^2 + {e_2 w_2} + d_2 v_2)}~ d\lambda_\mu(u)~ d\lambda_\mu(w)  \\
&=&\frac{c_{\mu}^2 c_{\mu}^2 e^{\textbf{i}(a_1 v_1^2 + d_1 v_1)}}{|b_1 b_2|^{2{\mu}+2}}  \int_{\mathbb{R}^2} E_{\mu}(\textbf{i} w_1/b_1, v_1) E_{\mu}(-\textbf{i} w_1/b_1, u_1)  e^{-\textbf{i}(a_1 u_1^2 + d_1 u_1)} \int_{\mathbb{R}^2} f(u_1, u_2)  \\
&\times&  E_{\mu}(-\textbf{j} w_2/b_2, u_2) E_{\mu}(\textbf{j} w_2/b_2, v_2)    e^{-\textbf{j}(a_2 u_2^2 + d_2 u_2)} e^{\textbf{j}(a_2 v_2^2 + d_2 v_2)} ~ d\lambda_\mu(u)~ d\lambda_\mu(w) 
\end{eqnarray*}
Taking \,$ z_1=w_1/b_1$ and $\, z_2=w_2/b_2 $, we get 
\begin{eqnarray*}
&&\mathbb D^{-c, -b, -a}_{-e, -d, {\mu}} \left( \mathbb {D}^{a, b, c}_{d, e, {\mu}} [f(u)](w) \right)(v) \\
&=& \frac{c_{\mu}^2}{{|b_1 b_2|^{2{\mu}+2} }} e^{\textbf{i}(a_1 v_1^2 + d_1 v_1)} \Big \{ c_{\mu}^2 \int_{\mathbb{R}^2}  E_{\mu}(\textbf{i} z_1, v_1) \Big [ \int_{\mathbb{R}^2}  E_{\mu}(-\textbf{i} z_1, u_1) e^{-\textbf{i}(a_1 u_1^2 + d_1 u_1)} f(u_1, u_2)  \\ 
&\times&  E_{\mu}(-\textbf{j} z_2, u_2) e^{-\textbf{j}(a_2 u_2^2 + d_2 u_2)}    \left. E_{\mu}(\textbf{j} z_2, v_2) \right. {|b_1|^{2{\mu}+2}} {|b_2|^{2{\mu}+2}}  d\lambda_\mu(z) \Big ] d\lambda_\mu(u)\Big\} e^{\textbf{j}(a_2 v_2^2 + d_2 v_2)} \\
&=& e^{\textbf{i}(a_1 v_1^2 + d_1 v_1)}  \Big \{ c_{\mu}^2 \int_{\mathbb{R}^2} E_{\mu}(\textbf{i} z_1, v_1) \Big[ c_{\mu}^2 \int_{\mathbb{R}^2} E_{\mu}(-\textbf{i} z_1, u_1)   g(u_1, u_2) E_{\mu}(-\textbf{j} z_2, u_2) d\lambda_\mu(u) \Big ] \\
&\times&  E_{\mu}(\textbf{j} z_2, v_2)  d\lambda_\mu(z) \Big\}~ e^{\textbf{j}(a_2 v_2^2 + d_2 v_2)}.
\end{eqnarray*} 
Here $g(u_1, u_2) = e^{-\textbf{i}(a_1 u_1^2 + d_1 u_1)} f(u_1, u_2) e^{-\textbf{j}(a_2 u_2^2 + d_2 u_2)}$ and by using \eqref{eq2.10}, we get
\begin{eqnarray*}
&&\mathbb D^{-c, -b, -a}_{-e, -d, {\mu}} \left( \mathbb {D}^{a, b, c}_{d, e, {\mu}} [f(u)](w) \right)(v)\\
&=& e^{\textbf{i}(a_1 v_1^2 + d_1 v_1)} \Big \{ c_{\mu}^2 \int_{\mathbb{R}^2} E_{\mu}(\textbf{i} z_1, v_1) ~ \mathcal{D}^{\mathbb{H}}_{\mu}g(u_1,u_2) ~E_{\mu}(\textbf{j} z_2, v_2) ~d\lambda_\mu(z) \Big \} ~ e^{\textbf{j}(a_2 v_2^2 + d_2 v_2)} \\
& = & e^{\textbf{i}(a_1 v_1^2 + d_1 v_1)} \mathcal{D}_{\mu}^{-1}(\mathcal{D}^{\mathbb{H}}_{\mu} g)(v_1, v_2) ~  e^{\textbf{j}(a_2 v_2^2 + d_2 v_2)} 
\\
&= &e^{\textbf{i}(a_1 v_1^2 + d_1 v_1)}  \left( e^{-\textbf{i}(a_1 v_1^2 + d_1 v_1)} f(v_1, v_2) e^{-\textbf{j}(a_2 v_2^2 + d_2 v_2)} \right) e^{\textbf{j}(a_2 v_2^2 + d_2 v_2)}  \\
&=& f(v_1, v_2).
\end{eqnarray*}
Hence the theorem is proved.
\end{proof}
\end{thm} 
\begin{thm}[Parseval's identity for two-sided QQPDT] Let $ f, g \in L^2_{\mu}(\mathbb{R}^2,\mathbb{H})$, then we have
\begin{eqnarray*}
\int_{\mathbb{R}^2}f(v_1, v_2)~ \overline{g(v_1, v_2)} ~d\lambda_\mu(v)=\int_{\mathbb{R}^2} \mathbb D^{a,b,c}_{d,e,{\mu}}[f](w_1, w_2) ~\overline{\mathbb D^{a,b,c}_{d,e,{\mu}}[g] (w_1, w_2)}~
d\lambda_\mu(w).
\end{eqnarray*}
\begin{proof}
Let $f,g \in L^2_{\mu}(\mathbb{R}^2,\mathbb{H})$ and from \eqref{eq3.2}, we have
\begin{eqnarray*}
\mathbb D^{a,b,c}_{d,e,{\mu}} [f(v_1, v_2)](w_1, w_2) &=& \!  \! \frac{e^{-\textbf{i}(c_1 w_1^2 + e_1 w_1)} }{(\textbf{i}b_1)^{{\mu}+1}}  \mathcal{D}^{\mathbb{H}}_{\mu} [f_1] \left(\frac{w_1}{b_1}, \frac{w_2}{b_2}\right) \frac{e^{-\textbf{j}(c_2 w_2^2 + e_2 w_2)} }{(\textbf{j}b_2)^{{\mu}+1}}, \\
\textit{and}  ~ \overline{\mathbb D^{a,b,c}_{d,e,{\mu}}[g(v_1,v_2)](w_1, w_2)} &=& \!  \! \frac{e^{\textbf{i}(c_1 w_1^2 + e_1 w_1)} }{(-\textbf{i}b_1)^{{\mu}+1}} ~ \overline{\mathcal{D}^{\mathbb{H}}_{\mu} [g_1] \left(\frac{w_1}{b_1}, \frac{w_2}{b_2}\right)} ~ 
\frac{e^{\textbf{j}(c_2 w_2^2 + e_2 w_2)} }{(-\textbf{j}b_2)^{{\mu}+1}},\\
\textit{where} \quad
f_1(v_1, v_2) &=& e^{-\textbf{i}(a_1 v_1^2 + d_1 v_1)} f(v_1, v_2) e^{-\textbf{j}(a_2 v_2^2 + d_2 v_2)} \\ \textit{and} \quad
g_1(v_1, v_2) &=& e^{-\textbf{i}(a_1 v_1^2 + d_1 v_1)} g(v_1, v_2) e^{-\textbf{j}(a_2 v_2^2 + d_2 v_2).}
\end{eqnarray*}
Now,
\begin{eqnarray*}
&&\int_{\mathbb{R}^2}\mathbb D^{a,b,c}_{d,e,{\mu}} [f(v_1, v_2)](w_1,w_2)
\overline{\mathbb D^{a,b,c}_{d,e,{\mu}}[g(v_1,v_2)](w_1, w_2)} ~d\lambda_\mu(w) \\
&=& \frac{1}{|b_1|^{2{\mu}+2} |b_2|^{2{\mu}+2}}\int_{\mathbb{R}^2} \mathcal{D}^{\mathbb{H}} _{\mu} [f_1] \left(\frac{w_1}{b_1}, \frac{w_2}{b_2}\right) \overline{\mathcal{D} ^{\mathbb{H}}_{\mu} [g_1] \left(\frac{w_1}{b_1}, \frac{w_2}{b_2}\right)} d\lambda_\mu(w).
\end{eqnarray*}
 Put $z_1 = \frac{w_1}{b_1}$,  $z_2 = \frac{w_2}{b_2}$, and Using theorem \eqref{thm2.2}, we get
\begin{eqnarray*}
&\,& \int_{\mathbb{R}^2} \mathbb D^{a,b,c}_{d,e,{\mu}}[f(v_1,v_2)](w_1, w_2) \overline{\mathbb D^{a,b,c}_{d,e,{\mu}}[g(v_1,v_2)] (w_1, w_2)} ~d\lambda_\mu(w) \\
&=& \frac{1}{|b_1 b_2|^{2{\mu}+2}} \int_{\mathbb{R}^2} \mathcal{D} ^{\mathbb{H}}_{\mu}[f_1](z_1, z_2)~ \overline{\mathcal{D} ^{\mathbb{H}}_{\mu}[g_1](z_1, z_2)} ~|b_1|^{2{\mu}+1} {|b_2|^{2{\mu}+1}} b_1 b_2 \, d\lambda_\mu(z) \\
&=&\! \int_{\mathbb{R}^2} \mathcal{D} ^{\mathbb{H}}_{\mu}[f_1(v_1,v_2)](z_1, z_2) 
~\overline{\mathcal{D} ^{\mathbb{H}}_{\mu}[g_1(v_1,v_2)](z_1, z_2)} ~
d\lambda_\mu(z).\\
&=&\int_{\mathbb{R}^2} f_1(v_1, v_2) \overline{g_1(v_1, v_2)} ~d\lambda_\mu(v) \\
&=& e^{-\textbf{i}(a_1 v_1^2 + d_1 v_1)} f(v_1, v_2) e^{-\textbf{j}(a_2 v_2^2 + d_2 v_2)} \overline{e^{-\textbf{i}(a_1 v_1^2 + d_1 v_1)} ~g(v_1, v_2) e^{-\textbf{j}(a_2 v_2^2 + d_2 v_2)}} ~d\lambda_\mu(v)\\
&=& e^{-\textbf{i}(a_1 v_1^2 + d_1 v_1)} f(v_1, v_2) e^{-\textbf{j}(a_2 v_2^2 + d_2 v_2)} ~e^{\textbf{j}(a_2 v_2^2 + d_2 v_2)}~\overline{g(v_1, v_2)}  ~e^{\textbf{i}(a_1 v_1^2 + d_1 v_1)}~d\lambda_\mu(v)\\
&=& \int_{\mathbb{R}^2} f(v_1, v_2) \overline{g(v_1, v_2)} ~d\lambda_\mu(v).
\end{eqnarray*}
\end{proof}
\end{thm} 
In particular, if $f=g$, we get the Plancherel's formula 
\begin{eqnarray}
\left\|
\mathbb D^{a,b,c}_{d,e,\mu}[f]
\right\|_{L^2_\mu(\mathbb{R}^2,\mathbb{H})}
=\|f\|_{L^2_\mu(\mathbb{R}^2,\mathbb{H})}. 
\end{eqnarray}

\section{Heisenberg type Uncertainty Principle for two-sided QQPDT}
In this section, we develop a Heisenberg type uncertainty principle for the two-sided QQPDT. It establishes a lower bound on the product of the spreads of a function and its transform, showing that they cannot both be simultaneously well localized.
\begin{thm}
[Heisenberg type uncertainty principle for quaternion Dunkl transform  \cite{UPQDT}]
Let $f \in L^2_\mu(\mathbb{R}^2,\mathbb{H})$. Then we have
\begin{eqnarray}
&&\left(\int_{\mathbb{R}^2} |v|^{2} |f(v)|^{2}d\lambda_\mu(v)
\right)\left(\int_{\mathbb{R}^2} |w|^{2}
\left|\mathcal{D}^{\mathbb{H}}_{\mu}[f](w)\right|^{2} d\lambda_\mu(w)\right)\notag \\
&&\quad \quad \quad \quad \ge \left(2\mu+1\right)^{2}\left(\int_{\mathbb{R}^2} |f(v)|^{2} d\lambda_\mu(v)\right)^{2}.
\end{eqnarray}
\end{thm}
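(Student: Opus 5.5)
The plan is to reduce the two-dimensional quaternionic inequality to two one-dimensional, direction-wise Heisenberg inequalities, one for each variable. These will be proved with the Dunkl operators and the Plancherel formula for $\mathcal{D}^{\mathbb{H}}_{\mu}$, and then combined. I would first prove everything for $f\in\mathcal{S}(\mathbb{R}^2,\mathbb{H})$. I would then extend to $L^2_\mu(\mathbb{R}^2,\mathbb{H})$ by density, interpreting either side as $+\infty$ when the weighted integral diverges. Write $|v|^2=v_1^2+v_2^2$, $|w|^2=w_1^2+w_2^2$, and set
\[
A_k=\int_{\mathbb{R}^2}v_k^2|f(v)|^2\,d\lambda_\mu(v), \qquad B_k=\int_{\mathbb{R}^2}w_k^2\bigl|\mathcal{D}^{\mathbb{H}}_{\mu}[f](w)\bigr|^2\,d\lambda_\mu(w), \qquad k=1,2 .
\]

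\textbf{Step 1 (frequency side via Dunkl operators).} Let $T_k g=\partial_k g+\frac{2\mu+1}{2v_k}\bigl(g-g\circ\sigma_k\bigr)$ be the one-dimensional Dunkl operator acting in $v_k$, where $\sigma_k$ is the reflection $v_k\mapsto -v_k$. Since $T_k$ has real coefficients, it commutes with quaternion multiplication on either side. The one-dimensional kernel satisfies $T^{(v)}E_\mu(-\textbf{i}w,v)=-\textbf{i}w\,E_\mu(-\textbf{i}w,v)$, and similarly with $\textbf{j}$. Integrating by parts (the Dunkl operator is skew-adjoint for $|v_k|^{2\mu+1}dv_k$) should give
\[
\mathcal{D}^{\mathbb{H}}_{\mu}[T_1f](w)=\textbf{i}w_1\,\mathcal{D}^{\mathbb{H}}_{\mu}[f](w), \qquad \mathcal{D}^{\mathbb{H}}_{\mu}[T_2f](w)=\mathcal{D}^{\mathbb{H}}_{\mu}[f](w)\,\textbf{j}w_2 .
\]
Here the unit factor sits on the left for $k=1$ and on the right for $k=2$, because of the two-sided structure. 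Since $|\textbf{i}|=|\textbf{j}|=1$, the Plancherel formula for the QDT then yields $B_k=\|T_kf\|_{2,\mu}^2$.

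\textbf{Step 2 (direction-wise inequality).} Use the commutator identity $T_k(v_kf)-v_kT_kf=f+(2\mu+1)f\circ\sigma_k$. Taking the scalar part of the inner product with $f$ and using skew-adjointness of $T_k$ gives
\[
-2\,Sc\langle T_kf,\,v_kf\rangle_{\mathbb{H}}=\|f\|_{2,\mu}^2+(2\mu+1)\,Sc\langle f\circ\sigma_k,f\rangle_{\mathbb{H}} .
\]
The reflection term is the main obstacle, since it can be negative. I would split $f=f^{e}+f^{o}$ into even and odd parts in $v_k$. These parts are orthogonal, and on each of them $T_k$, $v_k$ and the reflection act diagonally. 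On the even part the right-hand side becomes $2(\mu+1)\|f^{e}\|^2$, and on the odd part the commutator gives $\|f^o\|^2-(2\mu+1)\|f^o\|^2$. For the odd part I would instead use the sharper standard argument $\|T_kf^{o}\|\,\|v_kf^{o}\|\ge(\mu+1)\|f^{o}\|^2$, which comes from the Hermite-type factorization $T_k\pm v_k$, exactly as in the classical one-dimensional Dunkl Heisenberg inequality of R\"osler. Recombining the two parts by Cauchy--Schwarz should give
\[
A_kB_k\ge(\mu+1)^2\|f\|_{2,\mu}^4 .
\]

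\textbf{Step 3 (combine).} By Cauchy--Schwarz, $(A_1+A_2)(B_1+B_2)\ge\bigl(\sqrt{A_1B_1}+\sqrt{A_2B_2}\bigr)^2\ge 4(\mu+1)^2\|f\|_{2,\mu}^4$. This is at least $(2\mu+1)^2\|f\|_{2,\mu}^4$, since $2\mu+2>2\mu+1\ge 0$ for $\mu\ge-\frac12$. That is the claimed inequality for Schwartz functions. The density extension uses Fatou's lemma on the weighted integrals, together with continuity of the QDT on $L^2_\mu$ from the Plancherel formula.

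If the even/odd bookkeeping in Step 2 turns out to be delicate, there is a fallback. One can drop to the weaker bound obtained by estimating $|Sc\langle f\circ\sigma_k,f\rangle|\le\|f\|^2$ only on the even part. This still leaves enough room to reach the constant $(2\mu+1)^2$ after summing over $k=1,2$.
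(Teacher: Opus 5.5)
The paper does not prove this statement; it quotes it from \cite{UPQDT}, so your argument has to stand on its own. Much of it does.

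\begin{itemize}
\item The intertwining relations $\mathcal{D}^{\mathbb{H}}_{\mu}[T_1f]=\mathbf{i}w_1\,\mathcal{D}^{\mathbb{H}}_{\mu}[f]$ and $\mathcal{D}^{\mathbb{H}}_{\mu}[T_2f]=\mathcal{D}^{\mathbb{H}}_{\mu}[f]\,\mathbf{j}w_2$ are correct. Skew-symmetry of $T_k$ extends real-bilinearly without reordering factors, and $\mathbf{i}$, $\mathbf{j}$ commute with their kernels. Hence $B_k=\|T_kf\|_{2,\mu}^2$.
\item The commutator identity and the even-part bound $\|T_kf^e\|\,\|v_kf^e\|\ge(\mu+1)\|f^e\|^2$ are right.
\item $T_k$ and $v_k$ both reverse $v_k$-parity, so the cross terms vanish and the Cauchy--Schwarz recombination works.
\item Step 3 actually yields $(2\mu+2)^2$. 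This is R\"osler's sharp $(\gamma+\frac d2)^2$ with $\gamma=2\mu+1$, $d=2$.
\end{itemize}

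\textbf{The density step.} The genuine gap is the passage from $\mathcal{S}(\mathbb{R}^2,\mathbb{H})$ to $L^2_\mu$. Write $A(f)=A_1+A_2$ and $B(f)=B_1+B_2$. Fatou's lemma and $L^2_\mu$-continuity of $\mathcal{D}^{\mathbb{H}}_{\mu}$ give only $\liminf_n A(f_n)\ge A(f)$ and $\liminf_n B(f_n)\ge B(f)$. That is the wrong direction. To pass $A(f_n)B(f_n)\ge C\|f_n\|_{2,\mu}^4$ to the limit, you need Schwartz approximants along which both weighted integrals \emph{converge}. In other words, you need density of $\mathcal{S}(\mathbb{R}^2,\mathbb{H})$ in $\{f\in L^2_\mu:\ |v|f\in L^2_\mu,\ |w|\,\mathcal{D}^{\mathbb{H}}_{\mu}[f]\in L^2_\mu\}$ for its graph norm. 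This is true and standard, but you must supply it. One route: cut off by $\phi(v/R)$ with $\phi\in C_c^\infty$ even in each variable, so that $T_k(\phi g)=\phi\,T_kg+(\partial_k\phi)g$. Then cut off $\mathcal{D}^{\mathbb{H}}_{\mu}[g]$ by $\psi(w/R)$ and argue symmetrically on the transform side. Alternatively, use the generalized Hermite expansion as R\"osler does.

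\textbf{The odd part and the fallback.} The odd part is not justified by the mechanism you cite. The factorization by $v_k\pm T_k$ carries no more information than the commutator identity. For $f$ odd in $v_k$ it gives $\|T_kf\|^2+\|v_kf\|^2=\|(v_k-T_k)f\|^2+2\mu\|f\|^2=\|(v_k+T_k)f\|^2-2\mu\|f\|^2$. After scaling in $v_k$ this gives only $\|T_kf\|\,\|v_kf\|\ge|\mu|\,\|f\|^2$.

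The bound you want on odd functions is true, even with $\mu+2$. In R\"osler's proof, however, it comes from spectral input: the generalized Hermite functions diagonalize the Dunkl oscillator with eigenvalues $2n+2\mu+2$. That theorem holds for every $f$, so once you invoke it the even/odd split is superfluous.

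A self-contained fix is available. Write $f^o=v_kh$ with $h$ smooth and even in $v_k$, and let the subscript $\mu+1$ denote the weight $|v_k|^{2\mu+3}$ in the $k$-th variable. One integration by parts gives $\|f^o\|_{\mu}=\|h\|_{\mu+1}$, $\|v_kf^o\|_{\mu}=\|v_kh\|_{\mu+1}$ and $\|T_kf^o\|_{\mu}=\|\partial_kh\|_{\mu+1}$. Your even-case commutator argument at multiplicity $\mu+1$ then yields $\|T_kf^o\|\,\|v_kf^o\|\ge(\mu+2)\|f^o\|^2$.

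The same limitation rules out your fallback. Take $f=v_1v_2e^{-|v|^2/2}$. The commutator identity gives $-2Sc\langle T_kf,v_kf\rangle_{\mathbb{H}}=-2\mu\|f\|_{2,\mu}^2$ for both $k$. So the method delivers only $4\mu^2\|f\|_{2,\mu}^4$ after summing. This is below $(2\mu+1)^2\|f\|_{2,\mu}^4$ for every $\mu>-\frac14$ and gives nothing at $\mu=0$.
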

Now we are going to establish the Heisenberg-type uncertainty principle for the two-sided QQPDT.
\begin{thm}[Heisenberg type inequality for quaternion  quadratic-phase Dunkl transform]
Let $f \in L^2_\mu(\mathbb{R}^2,\mathbb{H}),$  we have
\begin{eqnarray}
&& \left(\int_{\mathbb{R}^2} |v|^{2} |f(v)|^{2}  d\lambda(v) 
\right)\left(\int_{\mathbb{R}^2} |w|^{2}
\left|\mathbb{D}^{a,b,c}_{d,e,\mu}[f](w)\right|^{2} d\lambda_\mu(w)\right) \notag \\
&&\qquad \ge |b_1 b_2|^{2\mu+2}\left(2\mu+1\right)^{2} 
\left(\int_{\mathbb{R}^2} |f(v)|^{2} d\lambda_\mu(v)
\right)^{2}. \label{eq4.2}
\end{eqnarray}
\end{thm}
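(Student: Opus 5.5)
The plan is to reduce the inequality to the Heisenberg principle for the quaternion Dunkl transform quoted just above, using the factorization \eqref{eq3.2} of Theorem \ref{th3.1}.

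\textbf{Step 1 (reduction).} Set $g(v)=e^{-\textbf{i}(a_1v_1^2+d_1v_1)}f(v)e^{-\textbf{j}(a_2v_2^2+d_2v_2)}$. The chirp factors are unit quaternions and $|pq|=|p||q|$, so $|g(v)|=|f(v)|$ pointwise. Hence
\[
\int_{\mathbb{R}^2}|v|^2|g(v)|^2\,d\lambda_\mu(v)=\int_{\mathbb{R}^2}|v|^2|f(v)|^2\,d\lambda_\mu(v),
\qquad \|g\|_{2,\mu}=\|f\|_{2,\mu}.
\]
By \eqref{eq3.2}, again using that $e^{-\textbf{i}(c_1w_1^2+e_1w_1)}$, $e^{-\textbf{j}(c_2w_2^2+e_2w_2)}$ are unit quaternions and $|(\textbf{i}b_1)^{\mu+1}|=|b_1|^{\mu+1}$, we get
\[
\bigl|\mathbb{D}^{a,b,c}_{d,e,\mu}[f](w)\bigr|^2=\frac{1}{|b_1b_2|^{2\mu+2}}\Bigl|\mathcal{D}^{\mathbb{H}}_\mu[g]\Bigl(\frac{w_1}{b_1},\frac{w_2}{b_2}\Bigr)\Bigr|^2 .
\]
This identity is first established for $f\in\mathcal{S}(\mathbb{R}^2,\mathbb{H})$. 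It extends to $L^2_\mu$ by density together with the Plancherel formula for the QQPDT, applying the same density argument as in the Riemann--Lebesgue proposition.

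\textbf{Step 2 (change of variables).} Substitute $w_k=b_kz_k$ in $\int_{\mathbb{R}^2}|w|^2|\mathbb{D}^{a,b,c}_{d,e,\mu}[f](w)|^2\,d\lambda_\mu(w)$. The measure transforms as $d\lambda_\mu(w)=|b_1b_2|^{2\mu+2}\,d\lambda_\mu(z)$, exactly as in the Parseval proof, and the weight becomes $|w|^2=b_1^2z_1^2+b_2^2z_2^2$. The prefactor $|b_1b_2|^{-(2\mu+2)}$ from Step 1 is then absorbed by the Jacobian, leaving
\[
\int_{\mathbb{R}^2}\bigl(b_1^2z_1^2+b_2^2z_2^2\bigr)\,\bigl|\mathcal{D}^{\mathbb{H}}_\mu[g](z)\bigr|^2\,d\lambda_\mu(z).
\]

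\textbf{Step 3 (invoke the QDT Heisenberg inequality).} Bound the weight from below by a multiple of $|z|^2$. Then multiply by $\int_{\mathbb{R}^2}|v|^2|g|^2\,d\lambda_\mu$ and apply the quaternion Dunkl Heisenberg inequality to $g$, which gives the lower bound $(2\mu+1)^2\|g\|_{2,\mu}^4$. Finally replace $\|g\|_{2,\mu}$ by $\|f\|_{2,\mu}$ using Step 1.

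\textbf{Main obstacle: the constant in Step 3.} The honest pointwise comparison is $b_1^2z_1^2+b_2^2z_2^2\ge \min(b_1^2,b_2^2)\,|z|^2$, which yields the constant $\min(b_1^2,b_2^2)(2\mu+1)^2$. The stated constant $|b_1b_2|^{2\mu+2}(2\mu+1)^2$ follows from this route only when
\[
\min(b_1^2,b_2^2)\ \ge\ |b_1b_2|^{2\mu+2}.
\]
This holds, for example, when $\mu=-\tfrac12$ and $|b_1|=|b_2|$, or when $|b_1|,|b_2|$ are small enough relative to $\mu$. It fails in general: for $\mu=-\tfrac12$ and $|b_1|=|b_2|=b$ it reads $b^2\ge b^2$, which is sharp, but any imbalance $|b_1|\ne|b_2|$ makes the left side smaller than the right.

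An alternative is to keep the scaling factor $|b_1b_2|^{-(2\mu+2)}$ from Step 1 without the Jacobian, as if the measure were not rescaled. That would make the power $|b_1b_2|^{2\mu+2}$ appear directly, but it is only legitimate if the normalization of the transform is changed. So this step cannot be patched by bookkeeping alone.

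\textbf{How I would resolve it.} I would first test the inequality with $f$ a Gaussian, for which both sides of the QDT Heisenberg inequality are explicitly computable, to see whether the stated constant can hold in general. If the Gaussian test violates it, I would prove the theorem under the explicit hypothesis $\min(b_1^2,b_2^2)\ge|b_1b_2|^{2\mu+2}$, using Steps 1--3 as above. Under that hypothesis, $\min(b_1^2,b_2^2)(2\mu+1)^2\ge|b_1b_2|^{2\mu+2}(2\mu+1)^2$, so the derived bound implies \eqref{eq4.2} as stated.
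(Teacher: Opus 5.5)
\textbf{A genuine gap, but it is in the statement rather than in your argument.} Your Steps 1 and 2 are correct and are the same reduction the paper uses. The obstacle you isolate in Step 3 is real: your proposal does not prove the theorem as written, and no proof can.

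The paper reaches the constant $|b_1b_2|^{2\mu+2}$ only through an invalid step. It first arrives, correctly up to writing $w$ for $u$, at $I_2=\int_{\mathbb{R}^2}\bigl(u_1^2/b_1^2+u_2^2/b_2^2\bigr)\bigl|\mathbb{D}^{a,b,c}_{d,e,\mu}[f](u)\bigr|^2\,d\lambda_\mu(u)$. It then asserts $I_2=|b_1b_2|^{2\mu+2}\int_{\mathbb{R}^2}|w|^2\bigl|\mathbb{D}^{a,b,c}_{d,e,\mu}[f](w)\bigr|^2\,d\lambda_\mu(w)$. That substitutes $u=(b_1w_1,b_2w_2)$ in the weight and in the measure but leaves the transform evaluated at $w$ instead of at $(b_1w_1,b_2w_2)$. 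This is exactly the ``scaling factor without the Jacobian'' bookkeeping you rightly rejected. The correct expression only gives $I_2\le\max(b_1^{-2},b_2^{-2})\int_{\mathbb{R}^2}|u|^2\bigl|\mathbb{D}^{a,b,c}_{d,e,\mu}[f](u)\bigr|^2\,d\lambda_\mu(u)$, which yields your constant $\min(b_1^2,b_2^2)(2\mu+1)^2$.

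The Gaussian test you propose does refute the stated inequality. For arbitrary $a,c,d,e$, take $f(v)=e^{\mathbf{i}(a_1v_1^2+d_1v_1)}e^{-|v|^2/2}e^{\mathbf{j}(a_2v_2^2+d_2v_2)}$, so that $g(v)=e^{-|v|^2/2}$. The odd parts of the Dunkl kernels integrate to zero against $g$, and the choice of $c_\mu$ makes $\mathcal{D}^{\mathbb{H}}_\mu[g](z)=e^{-|z|^2/2}$. Using $\int_{\mathbb{R}}|x|^{s}e^{-x^2}\,dx=\Gamma(\frac{s+1}{2})$, you get $\int_{\mathbb{R}^2}|v|^2|f(v)|^2\,d\lambda_\mu(v)=2(\mu+1)\|f\|_{2,\mu}^2$. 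By your Step 2, $\int_{\mathbb{R}^2}|w|^2\bigl|\mathbb{D}^{a,b,c}_{d,e,\mu}[f](w)\bigr|^2\,d\lambda_\mu(w)=(\mu+1)(b_1^2+b_2^2)\|f\|_{2,\mu}^2$. The left-hand side is therefore $2(\mu+1)^2(b_1^2+b_2^2)\|f\|_{2,\mu}^4$, which is only quadratic in $b$.

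For $\mu=0$ and $b_1=b_2=3$ this equals $36\|f\|_{2,\mu}^4$, while the claimed lower bound is $81\|f\|_{2,\mu}^4$. More generally the claim fails for every $\mu>-\frac12$ once $|b_1|=|b_2|=b$ with $b^{4\mu+2}>\bigl(\frac{2\mu+2}{2\mu+1}\bigr)^2$. So the correct theorem is the one your route actually proves, with $\min(b_1^2,b_2^2)(2\mu+1)^2$ in place of $|b_1b_2|^{2\mu+2}(2\mu+1)^2$; your conditional version follows from it.

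One correction to your discussion: $\mu=-\frac12$ with $|b_1|\neq|b_2|$ is not a failure of the theorem. In that case $(2\mu+1)^2=0$, so the inequality is trivial. Failure of the sufficient condition $\min(b_1^2,b_2^2)\ge|b_1b_2|^{2\mu+2}$ is not by itself a counterexample, which is why the explicit Gaussian is needed.
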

\begin{proof}
From the relation \eqref{eq3.2} we have
\begin{eqnarray}\label{eq4.3}
\mathbb{D}^{a,b,c}_{d,e,\mu}[f](w)
= \frac{ e^{-\textbf{i}\left(c_1w_1^2+e_1w_1\right)} }{(\textbf{i}b_1)^{\mu+1}}
\mathcal{D}^{\mathbb{H}} _{\mu}(g)\!\left(\frac{w_1}{b_1},\frac{w_2}{b_2}\right) \frac{e^{-\textbf{j}\left(c_2 w_2^{2}+e_2 w_2\right)}}{(\textbf{j}b_2)^{\mu+1}},
\end{eqnarray}
where
\begin{eqnarray*}
g(v_1,v_2)=e^{-\textbf{i}\left(a_1v_1^{2}+d_1v_1\right)} f(v_1,v_2) e^{-\textbf{j}\left(a_2v_2^{2}+d_2v_2\right)}.
\end{eqnarray*}
Now, apply the Heisenberg inequality for the quaternion Dunkl transform to the function $g$, we obtain
\begin{eqnarray}\label{eq4.4}
\left(
\int_{\mathbb{R}^2} |v|^{2} |g(v)|^{2} d\lambda_\mu(v)
\right)&\times&\left(\int_{\mathbb{R}^2} |w|^{2}
\left|\mathcal{D}_{\mu}[g](w)\right|^{2}
d\lambda_\mu(w)
\right) \notag\\
&\ge& \left(2\mu+1\right)^{2}
\left(
\int_{\mathbb{R}^2} |g(v)|^{2} d\lambda_\mu(v)
\right)^{2}.
\end{eqnarray}
Since $|g(v)| = |f(v)|$, then $1^{st}$ integral becomes
\begin{eqnarray*}
\int_{\mathbb{R}^2} |g(v)|^{2} d\lambda(v)
&=&\int_{\mathbb{R}^2} |f(v)|^{2} d\lambda(v),
\end{eqnarray*}
which implies
\begin{eqnarray*}
\int_{\mathbb{R}^2} |v|^{2} |g(v)|^{2} d\lambda(v)
&=&\int_{\mathbb{R}^2} |v|^{2} |f(v)|^{2} d\lambda(v) .
\end{eqnarray*}
Now substitution $w = \left(\frac{u_1}{b_1},\frac{u_2}{b_2}\right)$ in the second integral, we obtain
\begin{eqnarray}
I_2=\int_{\mathbb{R}^2}\left(\frac{u_1^2}{b_1^2} + \frac{u_2^2}{b_2^2}
\right)\left|\mathcal{D}^{\mathbb{H}}_\mu[g]\!\left(\frac{u_1}{b_1},\frac{u_2}{b_2}\right)\right|^2\frac{d\lambda_\mu(u)}{|b_1|^{2\mu+2}|b_2|^{2\mu+2}}. \label{eq4.5}
\end{eqnarray}
Taking absolute values in equation \eqref{eq4.3}, we get
\begin{eqnarray*}
\left|
\mathbb{D}^{a,b,c}_{d,e,\mu}[f](w)
\right|
=\frac{1}{|b_1|^{\mu+1} \, |b_2|^{\mu+1}}
\left|
\mathcal{D} ^{\mathbb{H}}_{\mu}\!\left[g\right]\!\left(\frac{w_1}{b_1},\frac{w_2}{b_2}\right)
\right| .
\end{eqnarray*}
Substituting this into the equation \eqref{eq4.5}, we obtain
\begin{eqnarray}
I_2=\int_{\mathbb{R}^2}\left(
\frac{u_1^2}{b_1^2} + \frac{u_2^2}{b_2^2}\right)\left|\mathbb{D}^{a,b,c}_{d,e,\mu}[f](w)\right|^2
d\lambda_\mu(u).
\end{eqnarray}
This implies
\begin{eqnarray}
I_2=|b_1|^{2\mu+2}|b_2|^{2\mu+2} \int_{\mathbb{R}^2}\left({
{w_1}^2+{w_2}^2}\right)\left|\mathbb{D}^{a,b,c}_{d,e,\mu}[f](w)\right|^2
d\lambda_\mu(w).
\end{eqnarray}
Now equation \eqref{eq4.4} becomes 
\begin{eqnarray*}
&& \left(\int_{\mathbb{R}^2} |v|^{2} |f(v)|^{2}  d\lambda_\mu(v) 
\right)  \left(\int_{\mathbb{R}^2} |w|^{2}
\left|\mathbb{D}^{a,b,c}_{d,e,\mu}[f](w)\right|^{2} d\lambda_\mu(w) \right) \notag \\
&&\qquad \ge |b_1 b_2|^{2\mu+2} \left(2\mu+1\right)^{2}
\left(\int_{\mathbb{R}^2} |f(v)|^{2} d\lambda_\mu(v) 
\right)^{2}.
\end{eqnarray*}
\end{proof}
Which completes the proof.

\section{Donoho--Stark Uncertainty Principle}
In this section, we establish a Donoho-Stark-type uncertainty principle \cite{chen2015, Sahbani, Dar2023} for the QDT and QQPDT. It characterizes the trade off between the concentration of a function and its transform on measurable sets. we first introduce the notions of time-limitation and frequency-limitation in both QDT and QQPDT setting.
To do so, we first introduce the notion of $\varepsilon$-concentrate of quaternion valued signal on a measurable set. 

\begin{defn}[$\varepsilon_\mathcal{M}$-concentrated on $\mathcal{M}$ for QDT and QQPDT] 
Let $\mathcal{M}$ be a measurable subset of $\mathbb{R}^{2}$.
A function
$f\in L_{\mu}^{2}(\mathbb{R}^{2},\mathbb{H})$
is said to be $\varepsilon_\mathcal{M}$-concentrated on $\mathcal{M}$ if
\begin{equation}
\|f-\chi_{\mathcal{M}}f\|_{2,\mu}
\leq
\varepsilon_\mathcal{M}\|f\|_{2,\mu},
\label{eps-concentration}
\end{equation}
where the characteristic function of $\mathcal{M}$ is defined by
\[
\chi_{\mathcal{M}}(v_1,v_2)
=\begin{cases}1, & \text{if } (v_1,v_2)\in\mathcal{M},\\[2mm]
0, & \text{otherwise}.
\end{cases}
\]
\end{defn}
Since \quad $f-\chi_{\mathcal{M}}f
=\chi_{\mathcal{M}^{c}}f,$
it follows that
$$
\|f-\chi_{\mathcal{M}}f\|_{2,\mu}^{2}
=
\|\chi_{\mathcal{M}^{c}}f\|_{2,\mu}^{2}
\nonumber\\
=\int_{\mathcal{M}^{c}}
|f(v_1,v_2)|^{2}
d\lambda_\mu(v).
\label{energy-outside}
$$

Therefore, condition \eqref{eps-concentration} is equivalent to

\begin{eqnarray}
\int_{\mathcal{M}^{c}}
|f(v_1,v_2)|^{2}
d\lambda_\mu(v)
\leq
\varepsilon_\mathcal{M}^{2} \notag
\|f\|_{2,\mu}^{2},
\end{eqnarray}
\begin{eqnarray}
\text{which implies} \quad
\left(
\int_{\mathbb R^2\setminus \mathcal M}
|f(v_1,v_2)|^{2}\,d\lambda_\mu(v)
\right)^{\frac12}
\le
\varepsilon_\mathcal{M} \|f\|_{2,\mu} \label{eq4.33}
\end{eqnarray}

Hence, an $\varepsilon_\mathcal{M}$-concentrated function has at most an
$\varepsilon_\mathcal{M}$-fraction of its total energy outside $\mathcal{M}$.
Equivalently, most of its energy is concentrated in $\mathcal{M}$.

In particular, when $\varepsilon_\mathcal{M}=0$, we obtain
\[
f=\chi_{\mathcal{M}}f,
\]
which implies that $f$ vanishes outside $\mathcal{M}$ and is exactly
supported on $\mathcal{M}$.

Similarly, we say that the QDT $\mathcal{D}^{\mathbb{H}}_\mu [f]$ is $\varepsilon_\mathcal M$-concentrated on a measurable set
$\mathcal M\subseteq \mathbb R^2$, if
\[
\left(
\int_{\mathbb R^2\setminus \mathcal M}
\left|
\mathcal{D}^{\mathbb{H}}_\mu [f](w_1,w_2)
\right|^2
d\lambda_\mu(v)
\right)^{\frac12}
\le
\varepsilon_\mathcal{M}
\left\|
 \mathcal{D}^{\mathbb{H}}_\mu [f]
\right\|_{2,\mu}.
\]
And we say that the QQPDT $D^{a,b,c}_{d,e,\mu}[f]$ is said to be $\varepsilon_\mathcal M$-concentrated on a measurable set
$\mathcal M\subseteq \mathbb R^2$, if
\begin{eqnarray*}
\left(
\int_{\mathbb R^2\setminus \mathcal M}
|\mathbb{D}^{a,b,c}_{d,e,\mu}[f](w_1,w_2)|^2\,d\lambda_\mu(v)
\right)^{\frac12}
\le
\varepsilon_\mathcal M\|\mathbb{D}^{a,b,c}_{d,e,\mu}[f](w_1,w_2)\|_{2,\mu}
\end{eqnarray*}

Similarly, we can define $\varepsilon_{\mathcal N}$-concentration on the set $\mathcal N$ for QDT and QQPFBT.
\begin{thm}
[Donoho-Stark uncertainty principle for QDT] 
Let $f\in L^2_\mu(\mathbb R_+,\mathbb H)$ and 
assume that $f$ is $\varepsilon_{\mathcal M}$-concentrated on
$\mathcal M\subseteq \mathbb R^2$ and that
$\mathcal{D}^{\mathbb{H}}_\mu [f]$
is $\varepsilon_{\mathcal N}$-concentrated on
$\mathcal N\subseteq \mathbb R^2$.
Then
\begin{eqnarray}
|\mathcal M|\,|\mathcal N|
\ge
\frac{1}{c_\mu^4}
\Bigl(
1-\varepsilon_{\mathcal M}
-\varepsilon_{\mathcal N}
\Bigr)^2. \label{leema5.1}
\end{eqnarray}
where $|\mathcal M|$ and $|\mathcal N|$ denote the measures of the sets
$\mathcal M$ and $\mathcal N$, respectively.
\end{thm}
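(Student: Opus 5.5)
The plan is the classical Donoho--Stark argument built on the time- and frequency-limiting operators. For measurable $\mathcal M,\mathcal N\subseteq\mathbb R^2$ of finite measure (otherwise there is nothing to prove), let $P_{\mathcal M}f=\chi_{\mathcal M}f$. Let $Q_{\mathcal N}$ be the band-limiting operator
\[
Q_{\mathcal N}f=(\mathcal D^{\mathbb H}_\mu)^{-1}\bigl[\chi_{\mathcal N}\,\mathcal D^{\mathbb H}_\mu[f]\bigr],
\]
defined through the inversion formula \eqref{eq2.10} and extended to $L^2_\mu$ by the Plancherel formula. Both are orthogonal projections of norm at most one. By the Plancherel formula and the concentration hypothesis,
\[
\|f-Q_{\mathcal N}f\|_{2,\mu}=\|\chi_{\mathcal N^c}\mathcal D^{\mathbb H}_\mu[f]\|_{2,\mu}\le\varepsilon_{\mathcal N}\|f\|_{2,\mu},
\]
and \eqref{eq4.33} gives $\|f-P_{\mathcal M}f\|_{2,\mu}\le\varepsilon_{\mathcal M}\|f\|_{2,\mu}$. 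The triangle inequality together with $\|P_{\mathcal M}\|\le1$ then yields
\[
\|f-P_{\mathcal M}Q_{\mathcal N}f\|_{2,\mu}\le\|f-P_{\mathcal M}f\|_{2,\mu}+\|P_{\mathcal M}(f-Q_{\mathcal N}f)\|_{2,\mu}\le(\varepsilon_{\mathcal M}+\varepsilon_{\mathcal N})\|f\|_{2,\mu}.
\]
Hence $\|P_{\mathcal M}Q_{\mathcal N}f\|_{2,\mu}\ge(1-\varepsilon_{\mathcal M}-\varepsilon_{\mathcal N})\|f\|_{2,\mu}$, so the operator norm of $P_{\mathcal M}Q_{\mathcal N}$ is at least $1-\varepsilon_{\mathcal M}-\varepsilon_{\mathcal N}$. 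If this quantity is negative the claim is trivial.

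The second step is to bound $\|P_{\mathcal M}Q_{\mathcal N}\|$ from above by its Hilbert--Schmidt norm. Writing out the inverse transform, $P_{\mathcal M}Q_{\mathcal N}f(v)$ equals
\[
c_\mu^2\chi_{\mathcal M}(v)\int_{\mathcal N}E_\mu(\mathbf i w_1,v_1)\,\mathcal D^{\mathbb H}_\mu[f](w)\,E_\mu(\mathbf j w_2,v_2)\,d\lambda_\mu(w).
\]
Since $|E_\mu|\le1$ by \eqref{eq2.100}, Cauchy--Schwarz in $w$ combined with Plancherel gives the pointwise bound
\[
|P_{\mathcal M}Q_{\mathcal N}f(v)|\le c_\mu^2\chi_{\mathcal M}(v)\,|\mathcal N|^{1/2}\|f\|_{2,\mu}.
\]
Squaring and integrating over $\mathcal M$ gives $\|P_{\mathcal M}Q_{\mathcal N}f\|_{2,\mu}\le c_\mu^2(|\mathcal M||\mathcal N|)^{1/2}\|f\|_{2,\mu}$. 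Combining this with the lower bound of the first step and squaring produces $c_\mu^4|\mathcal M||\mathcal N|\ge(1-\varepsilon_{\mathcal M}-\varepsilon_{\mathcal N})^2$, which is \eqref{leema5.1}. Here $|\cdot|$ is read as the $\lambda_\mu$-measure, which is what this computation naturally produces.

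The main obstacle is the non-commutativity in the pointwise bound. The integrand is a two-sided product $E_\mu(\mathbf i w_1,v_1)\,F(w)\,E_\mu(\mathbf j w_2,v_2)$, and Cauchy--Schwarz for quaternion integrands must be applied to absolute values. The way through is to use $|pqr|=|p||q||r|$, so that
\[
\Bigl|\int_{\mathcal N}E\,F\,E\,d\lambda_\mu\Bigr|\le\int_{\mathcal N}|F|\,d\lambda_\mu\le|\mathcal N|^{1/2}\|F\|_{2,\mu}.
\]
One must also justify that $Q_{\mathcal N}$ is well defined on all of $L^2_\mu$. This holds because $\chi_{\mathcal N}\mathcal D^{\mathbb H}_\mu[f]\in L^1_\mu\cap L^2_\mu$ when $|\mathcal N|<\infty$, so the integral formula above is valid for every $f\in L^2_\mu$. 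Finally, I would note that the hypothesis ``$f\in L^2_\mu(\mathbb R_+,\mathbb H)$'' should be read as $f\in L^2_\mu(\mathbb R^2,\mathbb H)$.
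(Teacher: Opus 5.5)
Your proof is correct. It follows the paper's Donoho--Stark structure: the same operators $P_{\mathcal M}$ and $Q_{\mathcal N}$, and the same triangle-inequality argument giving $\|P_{\mathcal M}Q_{\mathcal N}\|\ge 1-\varepsilon_{\mathcal M}-\varepsilon_{\mathcal N}$. You also derive $\|f-Q_{\mathcal N}f\|_{2,\mu}=\|\chi_{\mathcal N^c}\mathcal D^{\mathbb H}_\mu[f]\|_{2,\mu}\le\varepsilon_{\mathcal N}\|f\|_{2,\mu}$ from Plancherel, a step the paper only asserts.

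The difference is in the upper bound. The paper writes a scalar Hilbert--Schmidt formula $\|P_{\mathcal M}Q_{\mathcal N}\|_{HS}^2=c_\mu^4\int_{\mathcal M}\int_{\mathcal N}|E_\mu|^2|E_\mu|^2$, bounds it by $c_\mu^4|\mathcal M||\mathcal N|$, and uses that the operator norm is at most the Hilbert--Schmidt norm. Despite announcing a Hilbert--Schmidt bound, you never use one. You estimate $|P_{\mathcal M}Q_{\mathcal N}f(v)|$ pointwise via $|pqr|=|p||q||r|$, $|E_\mu|\le 1$, Cauchy--Schwarz on $\mathcal N$ and Plancherel, then integrate over $\mathcal M$.

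The paper's route is the standard one from the complex setting and is shorter to write. In this two-sided setting, though, it needs justification that it does not get. $P_{\mathcal M}Q_{\mathcal N}$ acts through the two-sided product $q\mapsto pqr$, so it is not $\mathbb H$-linear on either side, and the scalar-kernel formula is not automatic. Computed on $L^2_\mu(\mathbb R^2,\mathbb H)$ as a real Hilbert space, the kernel is the $4\times 4$ real matrix of $q\mapsto pqr$, whose Frobenius norm is $2|p||r|$. Taken literally, the Hilbert--Schmidt route then gives only $\|P_{\mathcal M}Q_{\mathcal N}\|\le 2c_\mu^2(|\mathcal M||\mathcal N|)^{1/2}$, losing a factor $4$ in the conclusion. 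Your pointwise estimate uses the operator norm $|p||r|$ of that matrix instead, and yields exactly the stated constant $c_\mu^{-4}$.

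One correction concerns your edge case. When $1-\varepsilon_{\mathcal M}-\varepsilon_{\mathcal N}<0$, the squared inequality is not trivial; it is false in general. For example, with $\varepsilon_{\mathcal M}=\varepsilon_{\mathcal N}=1$, every $f$ satisfies both concentration hypotheses for every pair of sets. The right-hand side is then $c_\mu^{-4}>0$, while $|\mathcal M||\mathcal N|$ can be made arbitrarily small.

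What your argument, and the paper's, actually proves is $c_\mu^2(|\mathcal M||\mathcal N|)^{1/2}\ge 1-\varepsilon_{\mathcal M}-\varepsilon_{\mathcal N}$. Squaring this is legitimate only under the implicit standard hypothesis $\varepsilon_{\mathcal M}+\varepsilon_{\mathcal N}\le 1$. You also need $f\neq 0$ to divide by $\|f\|_{2,\mu}$ when passing to the operator-norm bound. The paper omits both conditions too, so this is a defect of the statement rather than of your method.
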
 
\begin{proof}
Without loss of generality, we may assume that $\mathcal M$ and $\mathcal N$ have finite measure.

Define the time-limiting operator
\[
P_{\mathcal M}f
=
\chi_{\mathcal M}f,
\]
where $\chi_{\mathcal M}$ denotes the characteristic function of $\mathcal M$.

Further, define the frequency-limiting operator
\[
Q_{\mathcal N}f
=
(\mathcal{D}_\mu^{\mathbb H})^{-1}
\Bigl[
\chi_{\mathcal N}
\,\mathcal{D}_\mu^{\mathbb H}[f]
\Bigr].
\]

Since the quaternion Dunkl transform $\mathcal{D}_\mu^{\mathbb H}$ is unitary on
$L_\mu^2(\mathbb{R}^2,\mathbb H)$,
the operators $P_{\mathcal M}$ and $Q_{\mathcal N}$ are orthogonal projections. Consequently, $\|P_{\mathcal M} \|\leq 1.$

Since $f$ is $\varepsilon_{\mathcal M}$-concentrated on $\mathcal M$, we have
\begin{equation}
\|f-P_{\mathcal M}f\|_{2,\mu}
\leq
\varepsilon_{\mathcal M}\|f\|_{2,\mu}.
\label{eq1}
\end{equation}

similarly, $f$ is
$\varepsilon_{\mathcal N}$-concentrated on $\mathcal N$,
\begin{equation}
\|f-Q_{\mathcal N}f\|_{2,\mu}
\leq
\varepsilon_{\mathcal N}\|f\|_{2,\mu}.
\label{eq2}
\end{equation}

Using the triangle inequality, we obtain
\begin{eqnarray}
\|f-P_{\mathcal M}Q_{\mathcal N}f\|_{2,\mu}
&=&
\|f-P_{\mathcal M}f
+
P_{\mathcal M}f
-
P_{\mathcal M}Q_{\mathcal N}f\|_{2,\mu} \notag\\
&\leq&
\|f-P_{\mathcal M}f\|_{2,\mu}
+
\|P_{\mathcal M}(f-Q_{\mathcal N}f)\|_{2,\mu}.\label{eq5.66}
\end{eqnarray}

Since $\|P_{\mathcal M}\|\leq 1$, it follows that
\[
\|P_{\mathcal M}(f-Q_{\mathcal N}f)\|_{2,\mu}
\leq
\|f-Q_{\mathcal N}f\|_{2,\mu}.
\]

Combining this with \eqref{eq1} and \eqref{eq2}, then \eqref{eq5.66} becomes
\begin{eqnarray}
\|f-P_{\mathcal M}Q_{\mathcal N}f\|_{2,\mu}
\leq
(\varepsilon_{\mathcal M}
+
\varepsilon_{\mathcal N})
\|f\|_{2,\mu}. \label{eq5.5}
\end{eqnarray}
Again applying triangle inequality, we get
\begin{eqnarray}
\|f-P_{\mathcal M}Q_{\mathcal N}f\|_{2,\mu} \geq \|f\|_{2,\mu}- \|P_{\mathcal M}Q_{\mathcal N}f\|_{2,\mu}. \label{eq5.6}
\end{eqnarray}
From equation \eqref{eq5.5} and \eqref{eq5.6}, we get
\begin{eqnarray*} \|f\|_{2,\mu}- \|P_{\mathcal M}Q_{\mathcal N}f\|_{2,\mu} &\leq& (\varepsilon_{\mathcal M}
+\varepsilon_{\mathcal N})
\|f\|_{2,\mu} \\
\implies 
\|P_{\mathcal M}Q_{\mathcal N}f\|_{2,\mu}
&\geq&
\left(
1-\varepsilon_{\mathcal M}
-\varepsilon_{\mathcal N}
\right)
\|f\|_{2,\mu}.
\end{eqnarray*}

Now dividing by $\|f\|_{2,\mu}$ yields
\begin{equation}
1-\varepsilon_{\mathcal M}
-\varepsilon_{\mathcal N}
\leq
\|P_{\mathcal M}Q_{\mathcal N}\|.
\label{eq3}
\end{equation}

On the other hand, the Hilbert--Schmidt norm of
$P_{\mathcal M}Q_{\mathcal N}$ for QDT, defined as \cite{chen2015}
\begin{equation*}
\|P_{\mathcal M}Q_{\mathcal N}\|_{HS}^{2}
=
C_{\mu}^{4}
\int_{\mathcal M}
\int_{\mathcal N}
\left|
E_{\mu}(-\textbf{i} w_1, v_1) \right|^{2}
\left|E_{\mu}(-\textbf{j} w_2, v_2)
 \right|^{2}
\,d\lambda_\mu(v),
\end{equation*}
Using the estimate
$ |E_{\mu}(-\textbf{i} w_1, v_1)|\leq 1,$ and $ |E_{\mu}(-\textbf{j} w_2, v_2)|\leq 1,$
we obtain
\begin{equation*}
\|P_{\mathcal M}Q_{\mathcal N}\|_{HS}^{2}
\leq
C_{\mu}^{4}
|\mathcal M|\,|\mathcal N|.
\end{equation*}

From the classical inequality between the operator norm and the
Hilbert--Schmidt norm, we have

\[
\|P_{\mathcal M}Q_{\mathcal N}\|_{2,\mu}^{2}
\leq
\|P_{\mathcal M}Q_{\mathcal N}\|_{HS}^{2} \leq
C_{\mu}^{4}
|\mathcal M|\,|\mathcal N|.
\]
Consequently,
\begin{equation*}
\|P_{\mathcal M}Q_{\mathcal N}\|_{2,\mu}
\leq
C_{\mu}^{2}
\sqrt{|\mathcal M|\,|\mathcal N|}.
\label{QDT-norm-estimate}
\end{equation*}
Combining this estimate with \eqref{eq3}, we get
\[
1-\varepsilon_{\mathcal M}
-\varepsilon_{\mathcal N}
\leq
C_\mu^{2}
\sqrt{|\mathcal M|\,|\mathcal N|}.
\]

Squaring both sides, we obtain
\[
|\mathcal M|\,|\mathcal N|
\geq
\frac{1}{C_\mu^{\,4}}
\left(
1-\varepsilon_{\mathcal M}
-\varepsilon_{\mathcal N}
\right)^2.
\]
This completes the proof.
\end{proof}

\begin{thm}[Donoho--Stark uncertainty principle for QQPFBT]
Let
$f\in L^2_\mu(\mathbb R,\mathbb H)$ be a quaternion valued signal. Assume that $f$ is
$\varepsilon_{\mathcal M}$-concentrated on a measurable set
$\mathcal M\subseteq \mathbb R$ and
$D^{a,b,c}_{d,e,\mu}[f]$
is $\varepsilon_{\mathcal N}$-concentrated on a measurable set
$\mathcal N\subseteq \mathbb R$.

Then
\begin{eqnarray*}
|\mathcal M||\mathcal N|
\ge
\frac{|b_1 b_2|^{(\mu+1)}}{{c_\mu^4}}
\left(
1-\varepsilon_{\mathcal M}
-\varepsilon_{\mathcal N}
\right)^2.
\end{eqnarray*}
\end{thm}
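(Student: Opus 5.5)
The plan is to transfer the problem to the quaternion Dunkl transform through the factorisation \eqref{eq3.2}, and then invoke the Donoho--Stark uncertainty principle for the QDT proved just above (inequality \eqref{leema5.1}). As in that proof, we may assume $|\mathcal M|,|\mathcal N|<\infty$. Put
\[
g(v_1,v_2)=e^{-\textbf{i}(a_1v_1^2+d_1v_1)}f(v_1,v_2)e^{-\textbf{j}(a_2v_2^2+d_2v_2)}.
\]
Since the chirp factors are unit quaternions, $|g|=|f|$ pointwise. Hence $g$ is $\varepsilon_{\mathcal M}$-concentrated on $\mathcal M$ exactly when $f$ is. Taking moduli in \eqref{eq3.2} gives
\[
|\mathbb D^{a,b,c}_{d,e,\mu}[f](w)|=|b_1b_2|^{-(\mu+1)}\,|\mathcal D^{\mathbb H}_\mu[g](w_1/b_1,w_2/b_2)|.
\]
Changing variables $z=(w_1/b_1,w_2/b_2)$ then shows that the $\varepsilon_{\mathcal N}$-concentration of $\mathbb D^{a,b,c}_{d,e,\mu}[f]$ on $\mathcal N$ is equivalent to the $\varepsilon_{\mathcal N}$-concentration of $\mathcal D^{\mathbb H}_\mu[g]$ on the dilated set $\mathcal N_b=\{(w_1/b_1,w_2/b_2):w\in\mathcal N\}$. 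The Jacobian and weight factors are the same on both sides of the concentration inequality, so they cancel; this is exactly the Plancherel computation already carried out for the QQPDT.

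Applying \eqref{leema5.1} to $g$ with the pair $(\mathcal M,\mathcal N_b)$ yields
\[
|\mathcal M|\,|\mathcal N_b|\ge c_\mu^{-4}(1-\varepsilon_{\mathcal M}-\varepsilon_{\mathcal N})^2 .
\]
It remains to compare $|\mathcal N_b|$ with $|\mathcal N|$, and this comparison is where the power of $|b_1b_2|$ in the statement is decided.

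As a cross-check I would also redo the operator argument directly. Set $P_{\mathcal M}f=\chi_{\mathcal M}f$ and $Q_{\mathcal N}=(\mathbb D^{a,b,c}_{d,e,\mu})^{-1}\chi_{\mathcal N}\mathbb D^{a,b,c}_{d,e,\mu}$. The inversion theorem and the Parseval identity for the QQPDT make $Q_{\mathcal N}$ an orthogonal projection. The same triangle-inequality steps as before then give
\[
\|P_{\mathcal M}Q_{\mathcal N}\|\ge 1-\varepsilon_{\mathcal M}-\varepsilon_{\mathcal N}.
\]
The Hilbert--Schmidt kernel of $P_{\mathcal M}Q_{\mathcal N}$ is $c_\mu^2\,\Psi\cdots\Psi$, where $\Psi$ denotes the QQPD kernel. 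The kernel bound of the Lemma gives $|\Psi^{a_k,b_k,c_k}_{d_k,e_k,\mu}|\le|b_k|^{-(\mu+1)}$. Therefore
\[
\|P_{\mathcal M}Q_{\mathcal N}\|^2\le\|P_{\mathcal M}Q_{\mathcal N}\|_{HS}^2\le c_\mu^4|b_1b_2|^{-\theta}|\mathcal M||\mathcal N|,
\]
where $\theta$ is the exponent produced by the kernel estimate. Squaring and rearranging gives the result with the factor $|b_1b_2|^{\theta}/c_\mu^4$ in front of $(1-\varepsilon_{\mathcal M}-\varepsilon_{\mathcal N})^2$.

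The main obstacle is obtaining precisely the exponent $\mu+1$ claimed in the statement. If the kernel bound is applied to $|\Psi|^2$ inside the Hilbert--Schmidt integral, the natural outcome is $\theta=2\mu+2$. The stated inequality follows from that bound immediately when $|b_1b_2|\ge1$, since then $|b_1b_2|^{2\mu+2}\ge|b_1b_2|^{\mu+1}$. For $|b_1b_2|<1$ the comparison goes the wrong way. There I would first try to interpolate between the trivial bound $\|P_{\mathcal M}Q_{\mathcal N}\|\le1$ and the Hilbert--Schmidt bound, via $\|PQ\|\le\|PQ\|^{1/2}$, which is valid because the norm is at most $1$. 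This gives
\[
(1-\varepsilon_{\mathcal M}-\varepsilon_{\mathcal N})^2\le\|P_{\mathcal M}Q_{\mathcal N}\|^2\le\|P_{\mathcal M}Q_{\mathcal N}\| .
\]
Combined with $\|P_{\mathcal M}Q_{\mathcal N}\|\le c_\mu^2|b_1b_2|^{-(\mu+1)}\sqrt{|\mathcal M||\mathcal N|}$, this produces exactly one factor $|b_1b_2|^{\mu+1}$, at the cost of the power of $(1-\varepsilon_{\mathcal M}-\varepsilon_{\mathcal N})$ and of $c_\mu$. Reconciling those remaining constants with the displayed statement is the step I expect to require the most care. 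Failing that, I would fall back on the case split $|b_1b_2|\ge1$ versus $|b_1b_2|<1$ indicated above.
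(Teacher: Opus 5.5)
You have a genuine gap, and it sits where you suspected: nothing in the proposal proves the displayed inequality when $|b_1b_2|<1$.

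Your first route stops one line short. Take $|\cdot|$ to be the $d\lambda_\mu$-measure, the one in which the Hilbert--Schmidt bound is taken. The substitution $z_k=w_k/b_k$ contributes the Jacobian $|b_1b_2|^{-1}$ and the weight factor $|b_1b_2|^{-(2\mu+1)}$. Hence $|\mathcal N_b|=|b_1b_2|^{-(2\mu+2)}|\mathcal N|$, and \eqref{leema5.1} gives
\[
|\mathcal M|\,|\mathcal N|\ge\frac{|b_1b_2|^{2\mu+2}}{c_\mu^4}\bigl(1-\varepsilon_{\mathcal M}-\varepsilon_{\mathcal N}\bigr)^2 .
\]
This is exactly your Hilbert--Schmidt outcome $\theta=2\mu+2$, and it implies the stated bound only when $|b_1b_2|\ge1$.

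The interpolation does not rescue the case $|b_1b_2|<1$. From $(1-\varepsilon_{\mathcal M}-\varepsilon_{\mathcal N})^2\le\|P_{\mathcal M}Q_{\mathcal N}\|\le c_\mu^2|b_1b_2|^{-(\mu+1)}\sqrt{|\mathcal M||\mathcal N|}$, the lone factor $|b_1b_2|^{\mu+1}$ multiplies $\sqrt{|\mathcal M||\mathcal N|}$. Squaring gives $|\mathcal M||\mathcal N|\ge|b_1b_2|^{2\mu+2}c_\mu^{-4}(1-\varepsilon_{\mathcal M}-\varepsilon_{\mathcal N})^4$. That is the same power of $|b_1b_2|$ as before, with a worse power of $1-\varepsilon_{\mathcal M}-\varepsilon_{\mathcal N}$. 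Your fallback case split has no argument on its second branch.

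No argument can supply one, because the displayed inequality is false for small $|b_1b_2|$. Your reduction is an exact equivalence of concentration levels, and $|\mathcal M||\mathcal N|=|b_1b_2|^{2\mu+2}|\mathcal M||\mathcal N_b|$. Construct the counterexample as follows:
\begin{itemize}
\item Fix any nonzero $g\in L^2_\mu(\mathbb R^2,\mathbb H)$, and balls $\mathcal M$, $\mathcal N_0$ large enough that the concentration levels of $g$ on $\mathcal M$ and of $\mathcal D^{\mathbb H}_\mu[g]$ on $\mathcal N_0$ satisfy $\varepsilon_{\mathcal M}+\varepsilon_{\mathcal N}<1$.
\item Put $f=e^{\textbf{i}(a_1v_1^2+d_1v_1)}g\,e^{\textbf{j}(a_2v_2^2+d_2v_2)}$, $b_1=b_2=b$, and $\mathcal N=b\,\mathcal N_0$.
\end{itemize}
Then $f$ is $\varepsilon_{\mathcal M}$-concentrated on $\mathcal M$ and $\mathbb D^{a,b,c}_{d,e,\mu}[f]$ is $\varepsilon_{\mathcal N}$-concentrated on $\mathcal N$, with levels independent of $b$. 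The ratio of $|\mathcal M||\mathcal N|$ to $|b_1b_2|^{\mu+1}(1-\varepsilon_{\mathcal M}-\varepsilon_{\mathcal N})^2$ equals a fixed constant times $|b|^{2\mu+2}$. This tends to $0$ as $b\to0$, contradicting the claimed lower bound $c_\mu^{-4}$.

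The paper reaches the exponent $\mu+1$ only through an invalid step. It declares $\mathcal D^{\mathbb H}_\mu(g)$ concentrated on ``$\mathcal N/(b_1b_2)^{\mu+1}$'' and uses $\bigl|\mathcal N/|b_1b_2|^{\mu+1}\bigr|=|\mathcal N|/|b_1b_2|^{\mu+1}$. That confuses the amplitude factor $|b_1b_2|^{-(\mu+1)}$ in $|\mathbb D^{a,b,c}_{d,e,\mu}[f]|$ with a dilation of the set. The paper's own recovery condition \eqref{eq6.1}, once squared, is the $2\mu+2$ version.

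So finish your first route with the measure computation above and state the result with $|b_1b_2|^{2\mu+2}$. If the displayed form must be kept, add the hypothesis $|b_1b_2|\ge1$. Trying to reconcile constants for $|b_1b_2|<1$ is a dead end.
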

\begin{proof}
from equatin \eqref{eq3.2}, we have 
\begin{eqnarray*}
\mathbb{D}^{a,b,c}_{d,e,\mu}[f](v)
= \frac{e^{-\textbf{i}\left(c_1w_1^2+e_1w_1\right)}}{(\textbf{i}b_1)^{\mu+1}}
\mathcal{D}^{\mathbb{H}}_{\mu}(g)\!\left(\frac{w_1}{b_1},\frac{w_2}{b_2}\right) \frac{e^{-\textbf{j}\left(c_2 w_2^{2}+e_2 w_2\right)}}{(\textbf{j}b_2)^{\mu+1}},
\end{eqnarray*}
where
\begin{eqnarray*}
g(v_1,v_2)=e^{-\textbf{i}\left(a_1v_1^{2}+d_1v_1\right)}f(v_1,v_2)e^{-\textbf{j}\left(a_2v_2^{2}+d_2v_2\right)}.
\end{eqnarray*}
we obtain
\[
|g(v_1,v_2)|=|f(v_1,v_2)|.
\]
Since $f$ is $\varepsilon_{\mathcal M}$-concentrated on
$\mathcal M \subseteq\mathbb R^2$, then by Definition \eqref{eq4.3}, we have

\[
\left(
\int_{\mathbb R^2\setminus\mathcal M}
|f(v_1,v_2)|^2\,d\lambda_\mu(v)
\right)^{\frac12}
\le
\varepsilon_{\mathcal M}
\|f\|_{2,\mu}.
\]
This implise
\[
\left(
\int_{\mathbb R^{2}\setminus\mathcal M}
|g(v_1,v_2)|^2\,d\lambda_\mu(v)
\right)^{\frac12}
\le
\varepsilon_{\mathcal M}
\|g\|_{2,\mu},
\]
which shows that $g$ is
$\varepsilon_{\mathcal M}$-concentrated on $\mathcal M \subseteq\mathbb R^2$.
Moreover,
\begin{eqnarray*} 
\left|
\mathbb{D}^{a,b,c}_{d,e,\mu}[f](v)
\right|
&=&
\left|
\frac{1}{(b_1)^{\mu+1}}
\mathcal{D}^{\mathbb{H}}_{\mu}(g)\!\left(\frac{w_1}{b_1},\frac{w_2}{b_2}\right) \frac{1}{(b_2)^{\mu+1}}
\right|,\\
\implies 
\left|
\mathbb{D}^{a,b,c}_{d,e,\mu}[f](v)
\right|
&=&
\frac{1}{|b_1 b_2|^{\mu+1}}
\left|\mathcal{D}^{\mathbb{H}}_{\mu}(g)\!\left(\frac{w_1}{b_1},\frac{w_2}{b_2}\right) 
\right|
\end{eqnarray*}

Since $\mathbb{D}^{a,b,c}_{d,e,\mu}[f]$ is $\varepsilon_{\mathcal N}$-concentrated on
$\mathcal N \subseteq\mathbb R^2$,
it follows that
$\mathcal{D}^{\mathbb{H}}_{\mu}(g)\!\left(\frac{w_1}{b_1},\frac{w_2}{b_2}\right)$
is $\varepsilon_{\mathcal N}$-concentrated on $\mathcal N\subseteq\mathbb R_+^2$, i.e $\mathcal{D}_\mu^{\mathbb H}(g)$ is $\varepsilon_{\mathcal N}$-concentrated on $\frac{\mathcal N}{(b_1 b_2)^{\mu+1}}
\subseteq\mathbb R^2.$

Now applying the Donoho-Stark uncertainty principle for the quaternion
Dunkl transform \eqref{leema5.1} to the function $g$, we obtain
\[
|\mathcal M|
\left|
\frac{\mathcal N}{|b_1 b_2|^{\mu+1}}
\right|
\ge
\frac{1}{c_\mu^4}
\left(
1-\varepsilon_{\mathcal M}
-\varepsilon_{\mathcal N}
\right)^2.
\]
which implies
\[
|\mathcal M||\mathcal N|
\ge
\frac{|b_1 b_2|^{\mu+1}}{{c_\mu^4}}
\left(
1-\varepsilon_{\mathcal M}
-\varepsilon_{\mathcal N}
\right)^2.
\]
This completes the proof.
\end{proof}
\section{Signal Recovery in the QQPDT Domain }
In this section, we propose a signal recovery algorithm based on the QQPDT. The proposed algorithm makes use of the localization and concentration properties of the signal in both the original and transform domains to reconstruct the missing information effectively.

Let $g\in L^2_\mu(\mathbb R^{2},\mathbb H)$
be a quaternion-valued signal concentrated on a measurable set
\(\mathcal M\subset \mathbb R^{2}\).
Assume that part of the signal is lost on \(\mathcal M\), and that the observed signal is contaminated by an additive noise
$n\in L^2_\mu(\mathbb R^{2},\mathbb H).$

The received signal is given by
\[
r(v_1,v_2)=
\begin{cases}
g(v_1,v_2)+n(v_1,v_2), & (v_1,v_2)\in \mathcal M^c,\\
0, & (v_1,v_2)\in \mathcal M.
\end{cases}
\]
Assuming that \(n=0\) on \(\mathcal M\), we may write
$$r=(I-P_{\mathcal M})g+n,$$
where \(P_{\mathcal M}\) denotes the time-limiting operator
\[
(P_{\mathcal M}g)(v_1,v_2)
=
\chi_{\mathcal M}(v_1,v_2)g(v_1,v_2).
\]
Let \(Q_{\mathcal N}\) denote the QQPDT frequency-limiting operator
\[
Q_{\mathcal N}(g)
=
\left(
\mathbb{D}^{a,b,c}_{d,e,\mu}
\right)^{-1}
\Bigl(
\chi_{\mathcal N}
\mathbb{D}^{a,b,c}_{d,e,\mu}[g]
\Bigr).
\]
\begin{thm}[Signal recovery in the QQPDT domain]
Let $g\in L^2_\mu(\mathbb R,\mathbb H)$  
be such that $g=Q_{\mathcal N}g.$
Let
$g_n=\sum_{l=0}^{n}
(P_{\mathcal M}Q_{\mathcal N})^l ~r .$
If the measurable sets
\(\mathcal M,\mathcal N\subseteq\mathbb R\)
satisfy the condition
\begin{eqnarray}  
|\mathcal M|^{\frac12}
|\mathcal N|^{\frac12}
<
\frac{|b_1 b_2|^{\mu+1}}
{c_\mu^{2}}, \label{eq6.1}
\end{eqnarray}  
Therefore, the signal \(g\) on
\(x\in\mathcal M\) can be reconstructed using the following algorithm
\begin{eqnarray}   
\left\{\begin{aligned}g_0 &= r,\\
g_{n+1}&=r+P_{\mathcal M}Q_{\mathcal N}g_n .
\end{aligned}
\right.
\end{eqnarray}
Then,
$g_n \rightarrow g$ as   
$ n\rightarrow\infty $
in \(L^2_\mu(\mathbb R,\mathbb H)\). \label{algorithm}
\end{thm}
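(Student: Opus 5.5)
The plan is the standard Neumann-series (Papoulis--Gerchberg type) argument. The key estimate is $\|P_{\mathcal M}Q_{\mathcal N}\|<1$ on $L^2_\mu(\mathbb R^2,\mathbb H)$. It would come from the Hilbert--Schmidt bound in the proof of the Donoho--Stark principle for the QQPDT, which gives
\[
\|P_{\mathcal M}Q_{\mathcal N}\|\le \|P_{\mathcal M}Q_{\mathcal N}\|_{HS}\le \frac{c_\mu^{2}}{|b_1b_2|^{\mu+1}}\,|\mathcal M|^{1/2}|\mathcal N|^{1/2}.
\]
By \eqref{eq6.1} the right-hand side is strictly less than $1$.

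\textbf{Step 1: a kernel for $P_{\mathcal M}Q_{\mathcal N}$.} By the inversion theorem for the QQPDT and the conjugation property of the kernels, $Q_{\mathcal N}$ is the QQPDT conjugated by the multiplier $\chi_{\mathcal N}$. Hence $P_{\mathcal M}Q_{\mathcal N}$ is an integral operator built from $\chi_{\mathcal M}(v)$, $\chi_{\mathcal N}(w)$ and the two kernels $\Psi^{a_j,b_j,c_j}_{d_j,e_j,\mu}$ and their conjugates. For the quaternion two-sided case I would split $f$ into the four real components of the Definition, or use the factorization \eqref{eq3.2} to reduce to the QDT with unimodular chirp factors. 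The Hilbert--Schmidt norm is then controlled by
\[
c_\mu^4\int_{\mathcal M}\int_{\mathcal N}|\Psi_1|^2|\Psi_2|^2\,d\lambda_\mu(w)\,d\lambda_\mu(v).
\]
The kernel bound in the Lemma, $|\Psi_j|\le |b_j|^{-(\mu+1)}$, then gives the displayed estimate.

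\textbf{Step 2: convergence of the iteration.} Since $g=Q_{\mathcal N}g$, we have $P_{\mathcal M}g=P_{\mathcal M}Q_{\mathcal N}g$. In the noise-free case this gives $r=(I-P_{\mathcal M})g=(I-P_{\mathcal M}Q_{\mathcal N})g$. Because $\|P_{\mathcal M}Q_{\mathcal N}\|<1$, the operator $I-P_{\mathcal M}Q_{\mathcal N}$ is invertible, with Neumann series $\sum_{l\ge0}(P_{\mathcal M}Q_{\mathcal N})^l$ converging in operator norm. Therefore $g=\sum_{l\ge0}(P_{\mathcal M}Q_{\mathcal N})^l r$.

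An induction on the recursion $g_0=r$, $g_{n+1}=r+P_{\mathcal M}Q_{\mathcal N}g_n$ shows $g_n=\sum_{l=0}^n (P_{\mathcal M}Q_{\mathcal N})^l r$. The error estimate is
\[
\|g-g_n\|_{2,\mu}\le \sum_{l>n}\|P_{\mathcal M}Q_{\mathcal N}\|^l\|r\|_{2,\mu}=\frac{\rho^{n+1}}{1-\rho}\|r\|_{2,\mu}\to0,
\]
where $\rho=\|P_{\mathcal M}Q_{\mathcal N}\|$. If noise is present, the same computation gives $g_n\to g+(I-P_{\mathcal M}Q_{\mathcal N})^{-1}n$, with the extra term bounded by $\|n\|_{2,\mu}/(1-\rho)$. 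The statement should therefore be read with $n=0$, or as stability up to this term.

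\textbf{Main obstacle.} The difficult step is Step 1. The operator norm bound has to be justified rigorously for a two-sided quaternion kernel, where the Hilbert--Schmidt theory for real-linear operators on $\mathbb H$-valued $L^2$ needs care. I would first write $Q_{\mathcal N}f(v)=\int K(v,u,f(u))\,d\lambda_\mu(u)$ explicitly via \eqref{eq3.2}, with the chirps pulled outside. I would then apply Cauchy--Schwarz pointwise in $v$, using $|pqr|=|p||q||r|$, to get $|P_{\mathcal M}Q_{\mathcal N}f(v)|\le \chi_{\mathcal M}(v)\,\frac{c_\mu^2}{|b_1b_2|^{\mu+1}}\|\chi_{\mathcal N}\mathbb D^{a,b,c}_{d,e,\mu}[f]\|_{1,\mu}$. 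Cauchy--Schwarz in $w$ together with the Plancherel formula then bounds the last factor by $|\mathcal N|^{1/2}\|f\|_{2,\mu}$. Integrating over $\mathcal M$ yields the required estimate directly, without invoking abstract Hilbert--Schmidt theory.
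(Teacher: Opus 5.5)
Your proof is correct and has the same skeleton as the paper's: partial sums $g_n=\sum_{l=0}^{n}(P_{\mathcal M}Q_{\mathcal N})^l r$, the estimate $\|P_{\mathcal M}Q_{\mathcal N}\|\le c_\mu^2|b_1b_2|^{-(\mu+1)}|\mathcal M|^{1/2}|\mathcal N|^{1/2}<1$ from the kernel bound, and the Neumann series. It differs from the paper in two places, and in both your version is the sounder one.

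\textbf{The norm estimate.} The paper asserts $\|P_{\mathcal M}Q_{\mathcal N}\|_{HS}^2=c_\mu^4\int_{\mathcal M}\int_{\mathcal N}|\Psi_1|^2|\Psi_2|^2$ as in the scalar case, and your Step~1 repeats this. Read literally, it is off by a constant. $P_{\mathcal M}Q_{\mathcal N}$ is only real-linear on $L^2_\mu(\mathbb R^2,\mathbb H)\cong L^2_\mu(\mathbb R^2,\mathbb R^4)$. The map $q\mapsto AqB$ is $|A||B|$ times an orthogonal map of $\mathbb R^4$, so its Hilbert--Schmidt norm is $2|A||B|$. Write $\mathbb D$ for $\mathbb D^{a,b,c}_{d,e,\mu}$. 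Then the squared HS norm of $P_{\mathcal M}Q_{\mathcal N}=P_{\mathcal M}\mathbb D^{-1}\chi_{\mathcal N}\mathbb D$ is $4c_\mu^4\int_{\mathcal M}\int_{\mathcal N}|\Psi_1|^2|\Psi_2|^2$. The HS route therefore only gives $\|P_{\mathcal M}Q_{\mathcal N}\|\le 2c_\mu^2|b_1b_2|^{-(\mu+1)}|\mathcal M|^{1/2}|\mathcal N|^{1/2}$, which \eqref{eq6.1} does not make less than $1$.

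What you actually need is that the operator norm is dominated by the paper's quantity. Your final paragraph proves exactly this, so let it replace Step~1. You do not need a kernel in $u$. Since $\lambda_\mu(\mathcal N)<\infty$, $\chi_{\mathcal N}\mathbb D[f]\in L^1_\mu\cap L^2_\mu$, so
\[
Q_{\mathcal N}f(v)=c_\mu^2\int_{\mathcal N}\overline{\Psi_1(w_1,v_1)}\,\mathbb D[f](w)\,\overline{\Psi_2(w_2,v_2)}\,d\lambda_\mu(w)\quad\text{a.e.}
\]
From here, $|pqr|=|p||q||r|$, $|\Psi_j|\le|b_j|^{-(\mu+1)}$, Cauchy--Schwarz over $\mathcal N$, Plancherel, and integration over $\mathcal M$ finish the bound. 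Here $|\mathcal M|$ and $|\mathcal N|$ must be read as $\lambda_\mu$-measures.

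Also, do not source this estimate from the paper's QQPDT Donoho--Stark theorem. Its proof contains no such bound. Its constant $|b_1b_2|^{\mu+1}$ should be $|b_1b_2|^{2\mu+2}$, since $\{(w_1/b_1,w_2/b_2):w\in\mathcal N\}$ has $\lambda_\mu$-measure $|\mathcal N|/|b_1b_2|^{2\mu+2}$.

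\textbf{The noise.} The paper writes $r=(I-P_{\mathcal M}Q_{\mathcal N})g+n$. It then concludes $(I-P_{\mathcal M}Q_{\mathcal N})^{-1}r=g$ from ``$n=0$ on $\mathcal M$''. This silently discards $(I-P_{\mathcal M}Q_{\mathcal N})^{-1}n$, which vanishes only if $n=0$. Your reading is the correct statement: exact recovery when $n=0$, and otherwise $g_n\to g+(I-P_{\mathcal M}Q_{\mathcal N})^{-1}n$ with extra error at most $\|n\|_{2,\mu}/(1-\rho)$. Your geometric rate $\rho^{n+1}\|r\|_{2,\mu}/(1-\rho)$ is also sharper than the paper's bare $\|R_n\|\to0$.
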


\begin{proof}
Let
\[
B=(I-P_{\mathcal M}Q_{\mathcal N})^{-1}.
\]
First, we prove that the operator \(B\) is bounded. For this
we establish an upper bound for the Hilbert-Schmidt norm of
$P_{\mathcal M}Q_{\mathcal N}$ for QQPDT,
\begin{equation}
\|P_{\mathcal M}Q_{\mathcal N}\|_{HS}^{2}
=
c_{\mu}^4
\int_{\mathcal M}
\int_{\mathcal N}
\left|
\Psi^{a_1, b_1, c_1}_{d_1, e_1, \mu} (w_1, v_1) \right|^{2}
\left|\Psi^{a_2, b_2, c_2}_{d_2, e_2, \mu}(w_2, v_2)
 \right|^{2}
\,d\lambda_{\mu}(w),
\end{equation}
Using the estimate
$ \left|
\Psi^{a_1, b_1, c_1}_{d_1, e_1, \mu} (w_1, v_1) \right|\leq \frac{1}{|b_1|^{\mu+1}} ,$ and $ \left|\Psi^{a_2, b_2, c_2}_{d_2, e_2, \mu}(w_2, v_2)
 \right|\leq \frac{1}{|b_2|^{\mu+1}} ,$
we obtain
\begin{equation}
\|P_{\mathcal M}Q_{\mathcal N}\|_{HS}^{2}
\leq
\frac{c_{\mu}^4}{|b_1 b_2|^{2\mu+2}}
|\mathcal M|\,|\mathcal N|.
\end{equation}
This implies,
\[
\|P_{\mathcal M}Q_{\mathcal N}\|
\le
\frac{c_\mu^{2}}{|b_1 b_2|^{\mu+1}}
|\mathcal M|^{\frac12}
|\mathcal N|^{\frac12},
\]
and from condition \eqref{eq6.1}, we obtain
\begin{equation*}
\|P_{\mathcal M}Q_{\mathcal N}\| < 1.
\end{equation*}
Hence the operator
$ I-P_{\mathcal M}Q_{\mathcal N}$
is invertible and the series
$\sum_{l=0}^{\infty}
(P_{\mathcal M}Q_{\mathcal N})^l $
converges in operator norm.

Now define
$R_n=\sum_{l=n}^{\infty}
(P_{\mathcal M}Q_{\mathcal N})^l$.
Since \(R_n\) is the remainder of a convergent operator series, then
$\|R_n\|
\longrightarrow 0$
as  $n\rightarrow\infty$.

Moreover,
\[
\begin{aligned}
\lim_{n \to \infty}\|g_n-B~r\|_{\mu,2}
&= \lim_{n \to \infty}
\left\|
\sum_{l=n}^{\infty}
(P_{\mathcal M}Q_{\mathcal N})^l ~ r
\right\|_{\mu,2}
\\
&=\lim_{n \to \infty}
\|R_{n}~r\|_{\mu,2}
\\
&\le
\lim_{n \to \infty}\|R_{n}\|
\,\|r\|_{\mu,2}=0.
\end{aligned}
\]
This implies
\[
\lim_{n\to\infty}
\|g_n-B~r\|_{\mu,2}
=0.
\]

Therefore, $g_n \rightarrow B\,r$
as  $(n\rightarrow\infty)$ in \(L^2_\mu(\mathbb R,\mathbb H)\).
Now we will prove that $B\,r=g$.

Since \(g=Q_{\mathcal N}g\), we have
$r=(I-P_{\mathcal M})g+n
=(I-P_{\mathcal M}Q_{\mathcal N})g+n.$  

Furthermore, by assumption the noise satisfies
\(n=0\) on \(\mathcal M\).
Therefore,
\[
B\,r=(I-P_{\mathcal M}Q_{\mathcal N})^{-1}(I-P_{\mathcal M}Q_{\mathcal N})g=g.\]

Hence
$g_n \rightarrow g$ as   
$ n\rightarrow\infty $
in \(L^2_\mu(\mathbb R,\mathbb H)\).
Thus the information contained in the signal \(g\) on
\(\mathcal M\) can be reconstructed by the iterative algorithm \eqref{algorithm}.
\end{proof}

\section{Conclusion}
In this paper, we proposed QQPDT, a unified quaternion integral transform that combines classical Dunkl transform incorporates with quadratic-phase modulation through ten parameters together with multiplicity parametre $\mu \ge-\frac{1}{2}$, which provides a rich and greter flexible mathematical framework than the the corresponding existing Dunkl transforms. The QQPDT provided a unified framework in which several well known transforms were recovered as special cases through appropriate choices of its parameters. we determined several essential analytical and theoretical results of the proposed transform, including linearity, continuity, scaling, modulation,  Riemann-Lebesgue lemma, Parseval's and inversion formula. Furthermore, a Heisenberg and Donoho-Stark type uncertainty principle for the QQPDT was formulated and proved. In addition, we developed an efficient signal recovery algorithm in the QQPDT setting. This application not only demonstrates the practical usefulness of the QQPDT but also highlights its potential for solving signal reconstruction problems in quaternion valued signal processing. The proposed QQPDT playes an important role for 
substantial progression in harmonic analysis and provides greater flexibility for both theoretical and 
practical applications, particularly in image analysis and signal processing. Moreover, several classes of uncertainty principles for the 
transform remain to be investigated, which opens new directions for future
research. Further extensions may also be explored in higher dimensional algebraic frameworks, such as octonionic, biquaternion and related hypercomplex spaces.


\begin{thebibliography}{99}

\bibitem{mypaper}
M. Y. Bhat and A. H. Dar, \textit{Towards quaternion quadratic-phase Fourier transform}, Mathematical Methods in the Applied Sciences, 48: 10234--10253, (2025).

\bibitem{Safouane2026}
N. Safouane, A. Achak, E. Loualid, I. E. Adha,
\emph{Spectral theorems for the two-sided quaternionic Dunkl transform.}
Complex Analysis and Operator Theory, 20(6): 152, (2026).

\bibitem{Rosler2003}
M. Rösler,
\emph{Dunkl operators: theory and applications.}
In \emph{Orthogonal Polynomials and Special Functions: Leuven 2002}, Springer, Berlin, Heidelberg, 93--135, (2003).

\bibitem{Quaternion book} E. Hitzer, \emph{Quaternion and Clifford Fourier Transforms.} Chapman and Hall/CRC, New York (2021). 

\bibitem{IMAGEPROCESSING}
R. C. Gonzalez, R. E. Woods,
\emph{Digital Image Processing.}
Pearson Education India, (2009).

\bibitem{Dunkl1991}
C.~F.~Dunkl,
\textit{Integral kernels with reflection group invariance},
Can.\ \textbf{j}.\ Math., 43: 1213--1227, (1991). 

\bibitem{deJeu2006}
M. de Jeu,
\emph{Paley--Wiener theorems for the Dunkl transform.}
Transactions of the American Mathematical Society, 358(10): 4225--4250, (2006).

\bibitem{Ghazouani2016}
S. Ghazouani, F. Bouzeffour,
\emph{Heisenberg uncertainty principle for a fractional power of the Dunkl transform on the real line.}
Journal of Computational and Applied Mathematics, 294: 151--176, (2016).

\bibitem{Aprasad}
S.~Varghese, A.~Prasad, and M.~Kundu,
\textit{Properties and applications of quaternion quadratic-phase Fourier transforms},
\textbf{j}.\ Pseudo-Differ.\ Oper.\ Appl., 15: 1-26, (2024).

\bibitem{Tyr2025}
O. Tyr,
\emph{The Beurling theorem for the two-sided quaternionic Dunkl transform.}
Advances in Applied Clifford Algebras, 35(5): 49, (2025).

\bibitem{PrasadKundu2023}
A. Prasad, M. Kundu,
\emph{Spectrum of quaternion signals associated with quaternion linear canonical transform.}
Journal of the Franklin Institute, 361(2): 764--775, (2024).


\bibitem{Bahri2016}
M.~Bahri,
\textit{Uncertainty principles for quaternion-valued transforms},
Appl. Math. Comput., 284: 57--72, (2016).

\bibitem{colourimage}
P. Bas, N. Le Bihan, J. M. Chassery,
\emph{Color image watermarking using quaternion Fourier transform.}
In \emph{2003 IEEE International Conference on Acoustics, Speech, and Signal Processing, 2003. Proceedings (ICASSP'03)}, vol. 3: III-521, (2003).

\bibitem{Kou2016}
K.~Kou,
\textit{Quaternion linear canonical transform},
\textbf{j}. Math. Anal. Appl., 437: 567--588 (2016).

\bibitem{UPQDT}
M. Essenhajy, S. Fahlaoui,
\emph{Uncertainty principles for the two-sided quaternionic Dunkl transform.}
Boletín de la Sociedad Matemática Mexicana, 31(2): 89, (2025).


\bibitem{Saoudi2023}
A.~Saoudi,
\textit{Quadratic-Phase Dunkl Transform: Fundamental properties, translation operators, convolution product and HUP},
arXiv preprint arXiv:2512.22325 (2025).


\bibitem{Raj Kumar} A. Prasad, R. Kumar, 
\emph{Multiresolution analysis and orthonormal linear canonical wavelets in generalized Sobolev spaces.} Journal of Pseudo-Differential Operators and Applications, 4: 1--30, (2025).

\bibitem{TWOSIDEDQDT} {M. Essenhajy, S. Fahlaoui,} {The two-sided quaternionic dunkl transform and hardy’s theorem.}, Rend. Circ. Mat.Palermo,72: 621–633, (2023). 

\bibitem{Shah2021}
F.~A.~Shah, K.~S.~Nisar, W.~Z.~Lone, and A.~Y.~Tantary,
\textit{Uncertainty principles for the quadratic-phase Fourier transforms},
Math.\ Methods Appl.\ Sci., 44: 10416--10431, (2021).

\bibitem{fractionalqdt}
M.~Essenhajy,
\textit{The fractional two-sided quaternionic Dunkl transform and Heisenberg-type inequalities},
arXiv preprint arXiv:2510.11597, (2025), \newblock doi: 10.48550/arXiv.2510.11597.

\bibitem{Bahari2019}
M.~Bahri and R.~Ashino,
\textit{Two-dimensional quaternion linear canonical transform: Properties, convolution, correlation, and uncertainty principle},
Journal of Mathematics, 1--13 (2019).

\bibitem{Bhat2026}
M. Y. Bhat, A. Achak, N. Safouane,
\emph{Titchmarsh theorem for the two-sided quaternionic Dunkl transform.}
Annali dell'Università di Ferrara, 72(1): 1, (2026).

\bibitem{umamaheswari2026}
Umamaheswari S, S. K. Verma and H. Mejjaoli,
\textit{Real Paley–Wiener theorems for the linear canonical Dunkl transform}, Ann. Funct. Anal. 17: 1--25 (2026).

 

\bibitem{qsignal}
M. Bahri, E. S. Hitzer, R. Ashino, and R. Vaillancourt, \textit{Windowed Fourier transform of two-dimensional quaternionic signals}, Applied Mathematics and Computation, 216: 2366--2379, (2010).

\bibitem{Sahbani} J. Sahbani, Quantitative uncertainty principles for the canonical Fourier-Bessel transform, Acta Mathematica Sinica, English Series 38(2), 331--346 (2022).

\bibitem{Dar2023}
A. H. Dar, M. Y. Bhat,
\emph{Donoho-Stark's and Hardy's uncertainty principles for the short-time quaternion offset linear canonical transform.}
Filomat, 37(14): 4467--4480, (2023).

\bibitem{chen2015}
L. P. Chen, K. I. Kou, M. S. Liu,
\emph{Pitt's inequality and the uncertainty principle associated with the quaternion Fourier transform.}
Journal of Mathematical Analysis and Applications, 423(1): 681--700, (2015).




\end{thebibliography}
\end{document}